\def\red{}
\def\rojo{}
\def\blue{}
\def\card{\text{card}}
\def\espacio{~}
\newcommand{\nn}{\ensuremath{\mathbf{n}}}
\newcommand{\mdkn}{\ensuremath{M_d^{(k)}(n)}}
\newcommand{\adkn}{\ensuremath{A_d^{(k)}(n)}}
\newcommand\tinyelemental[6]
\newcommand{\elementary}[3]{\tikz[baseline=-4pt,scale=1, every node/.style={transform shape}]
    \tinyelemental{A}{0}{0}{#1}{#2}{#3};}
\newtheorem{theorem}{Theorem}[section]
\newtheorem{proposition}[theorem]{Proposition}
\newtheorem{lemma}[theorem]{Lemma}
\newtheorem{corollary}[theorem]{Corollary}
\theoremstyle{definition}
\newtheorem{definition}[theorem]{Definition}
\newtheorem{example}[theorem]{Example}
\theoremstyle{remark}
\newtheorem{remark}[theorem]{Remark}
\tikzset{
  elementary/.code n args={3}{%
    \node (r) [circle, draw, minimum width=7mm,inner sep=1pt] at (0.15,1) {\ensuremath{#3}};
    \node (A) [rectangle, draw, minimum width=1cm] at (0,0) {\ensuremath{#1}, \ensuremath{#2}};
    \draw[line width=0.5pt,-] (A.north-|r)--(r.south);
    \node at (1,0) {$\cdot$};
  }}
\title[\red{The rational (non-)formality of the non-$\red{3}$-equal manifolds}]{\red{The rational (non-)formality of the non-$3$-equal manifolds}}
\thanks{\red{The first author is grateful to Jos\'e Cantarero and CIMAT M\'erida for the warm hospitality while pieces of this work went underway.} \blue{The second author thanks the CIMAT Mérida Algebraic Topology Group for providing an enriching environment to complete parts of this paper under the support of a CONAHCYT postdoctoral grant.
}}
\author{Jes\'us Gonz\'alez \ \ and \ \ Jos\'e Luis Le\'on-Medina}
\begin{document}

\begin{abstract}
\red{Let $M^{(\red{k})}_{d}(n)$ be the manifold of $n$-tuples $(x_1,\ldots,x_n)\in(\mathbb{R}^d)^n$ having non-$k$-equal coordinates. We show that, for $d\geq2$, $M^{(\red{3})}_{d}(n)$ is rationally formal if and only if $n\leq6$. This stands in sharp contrast with the fact that all classical configuration spaces $M^{(\red{2})}_d(n)=\text{Conf\hspace{.4mm}}(\hspace{.2mm}\mathbb{R}^d,n)$ are rationally formal, just as are all complements of arrangements of arbitrary complex subspaces with geometric lattice of intersections. The rational non formality of $M^{(\red{3})}_{d}(n)$ for $n>6$ is established via detection of non-trivial triple Massey products assessed through Poincar\'e duality.} 
\end{abstract}

\maketitle

{\small 2020 Mathematics Subject Classification: 55R80, 52C35, 55S30, 55P62.}

{\small Keywords and phrases: Diagonal subspace arrangements, non-$k$-equal manifolds, \red{rational formal spaces, Massey products, Borel-Moore homology,} Poincar\'e duality.}

\section{Introduction and main results}\label{seccionintroduccion}
The non-$k$-equal manifold $\mdkn$ \red{is defined as the} complement in $(\mathbb{R}^d)^n$ of the arrangement $\adkn$ \red{consisting} of the diagonal subspaces
\[
A_I = \big\lbrace(x_1,\dots,x_n)\in \big(\mathbb{R}^d\big)^n \,\big\lvert\, x_{i_1} = \cdots = x_{i_k}\big\rbrace,
\]
where $I=\{i_1,\dots, i_k\}$ runs through the cardinality-$k$ subsets of the segment $\nn=\{1,2,\dots, n\}$. \red{Note that $\mdkn=(\mathbb{R}^d)^n$ for $k>n$, while $M^{(n)}_d(n)$ has the homotopy type of the $(dn-d-1)$-dimensional sphere. \blue{Hence, i}n this paper, we focus on the more interesting cases with $k<n$. We will also assume implicitly that $d\geq2$, so to stay \blue{a}way from the non-simply connected ---but aspherical--- spaces $M^{(3)}_1(n)$, and that $k\geq3$, so to stay away from the classical configuration spaces Conf$(\mathbb{R}^d,n)$.}

In \cite{Miller2012}, Miller obtained partial results on the structure of Massey products \red{in} the ``complex'' case, i.e., \red{for} non-$k$-equal manifolds \red{$\mdkn$} with $d=2$. \red{Miller} showed that all $p$-order Massey products on $M^{(k)}_2(n)$ vanish provided either one of the following conditions holds:
\begin{align}
3&\leq p<k\qquad \mbox{\hspace{-.5mm} (in view of \cite[Theorem 1.2]{Miller2012}).} \label{loworder} \\
n&\leq k(k-1)\quad \mbox{(in view of \cite[Corollary 1.3]{Miller2012}).} \label{lowparticles}
\end{align}
Both of these restrictions are \red{sharp as} neither the upper bound for $p$ in~(\ref{loworder}) nor the upper bound for $n$ in~(\ref{lowparticles}) can be improved for general $k$. Indeed, Miller proved in addition that
\begin{equation}\label{firsttriple}
\mbox{$M^{(3)}_2(n)$ admits non-trivial triple Massey products when $n>6$.}
\end{equation}

\red{This paper is motivated by the fact that the statement in (\ref{firsttriple}) turns out to be sharp also in rational-formality terms. In short: $M^{(3)}_2(n)$ cannot admit non-trivial triple Massey products for $n\leq6$ as, in fact, such a space is rationally formal, as recorded in (\ref{ratfor}) below. Indeed, start by recalling that any $c$-connected CW complex $X$ with $c\geq1$ and
\begin{equation}\label{conectividad}
\dim(X)\leq 3c+1
\end{equation}
is rationally formal (see \cite[Corollary 5.16]{Halperin1979}). Then, as observed in the proof of \cite[Theorem 3.3]{MR4421862}, for $d\geq2$ and $k\geq3$, the connectivity condition above holds for $\mdkn$ with $c=d(k-1)-2$, while (\ref{conectividad}) can be spelled out as}
\begin{equation}\label{dimcond}
\red{n(d-1)+m(k-2)\leq 3dk-2(d+3).}
\end{equation}
\red{Here and below $m$ stands for the integral part of $n/k$. Note that when $k=3$, condition (\ref{dimcond}) holds if and only if $n\leq6$ (recall $d\geq2$). The point then is that
\begin{equation}\label{ratfor}
\mbox{$M^{(3)}_d(n)$ is rationally formal for $n\leq6$.}
\end{equation}
However, $M^{(3)}_d(n)$ is not rationally formal for $n>6$, at least when $d=2$, in view (\ref{firsttriple}). The goal \blue{of} this paper is to show that the restriction $d=2$ in the latter assertion is\blue{,} in fact\blue{,} unnecessary:}
\begin{theorem}\label{driving0}
\red{For $d\geq2$, the non-3-equal manifold $M^{(3)}_d(n)$ is rationally formal if and only if $n\leq6$.}
\end{theorem}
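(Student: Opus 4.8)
The ``if'' implication is precisely statement (\ref{ratfor}). For the ``only if'' implication I would show that $M^{(3)}_d(n)$ carries a non-trivial triple Massey product whenever $d\ge 2$ and $n\ge 7$, hence fails to be rationally formal. The first step is a reduction to $n=7$: projection onto the first seven coordinates is a well-defined map $r\colon M^{(3)}_d(n)\to M^{(3)}_d(7)$ (a triple-free $n$-tuple restricts to a triple-free $7$-tuple), and adjoining $n-7$ pairwise distinct points placed, continuously in the configuration, far outside its bulk --- where they cannot enter a coincident triple --- produces a continuous section of $r$. Thus $M^{(3)}_d(7)$ is a homotopy retract of $M^{(3)}_d(n)$, and by naturality of Massey products along $r$ and this section (the usual way non-formality passes to homotopy retracts) it suffices to exhibit a non-trivial triple Massey product on $M^{(3)}_d(7)$ for each $d\ge 2$. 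For $d=2$ this is Miller's (\ref{firsttriple}) \cite{Miller2012}, so the real task is to remove the restriction $d=2$.

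For $M:=M^{(3)}_d(7)$ I would work with an explicit multiplicative model of $H^{*}(M;\mathbb{Q})$ coming from the Goresky--MacPherson decomposition of diagonal-arrangement complements and its ring refinement (de Concini--Procesi, Yuzvinsky, and Miller for $d=2$). The intersection lattice is the $3$-equal partition lattice on $\{1,\dots,7\}$, which does not depend on $d$, whereas each $A_I$ with $|I|=3$ has real codimension $2d$ in $(\mathbb{R}^d)^{7}$; consequently the class $\alpha_I$ attached to a triple $I$ lives in degree $2d-1$, and the whole algebra is that of the $d=2$ case up to regrading and up to signs depending only on the parity of $d$. Taking the ``path'' $I=\{1,2,3\}$, $J=\{3,4,5\}$, $K=\{5,6,7\}$ (which uses all seven indices), the products $\alpha_I\alpha_J$ and $\alpha_J\alpha_K$ vanish, by the same order-complex-connectivity argument as for $d=2$ --- the relevant join in the lattice lands in a summand whose reduced homology is concentrated away from the product degree --- so $\langle\alpha_I,\alpha_J,\alpha_K\rangle$ is defined, a coset of $\alpha_I\cdot H^{*}(M)+H^{*}(M)\cdot\alpha_K$ inside $H^{3(2d-1)-1}(M;\mathbb{Q})$. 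What must be shown is that this coset misses $0$ for every $d\ge 2$; since the cohomology \emph{ring} is the same for all $d$ while formality is exactly what is at issue, the ring structure cannot settle this, and the argument has to descend to the chain level.

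Here Poincar\'e duality does the work. The manifold $M$ is orientable of dimension $7d$ with vanishing top ordinary cohomology, so one cannot integrate the Massey product against a fundamental class; instead I would invoke Poincar\'e--Lefschetz duality in its Borel--Moore form, $H^{j}(M;\mathbb{Q})\cong H^{\mathrm{BM}}_{7d-j}(M;\mathbb{Q})$, to realize $\langle\alpha_I,\alpha_J,\alpha_K\rangle$ as a Borel--Moore cycle and pair it, through the non-degenerate intersection pairing, against an explicit cycle $w$ of complementary dimension assembled from the collision strata and linking spheres of the $A_I$. The indeterminacy is handled by arranging that $\alpha_I\cap w$ and $\alpha_K\cap w$ vanish (automatic once the Borel--Moore degrees involved are small, and to be verified directly for the few exceptional small $d$), so that $\langle\langle\alpha_I,\alpha_J,\alpha_K\rangle,w\rangle$ is a well-defined scalar. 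That scalar is a secondary intersection (``linking'') number: represent $\alpha_I,\alpha_J,\alpha_K$ by linking spheres of $A_I,A_J,A_K$, choose explicit chains cobounding $\alpha_I\alpha_J$ and $\alpha_J\alpha_K$, assemble the Massey chain, and count its intersection with $w$. Because the three diagonal strata sit in sufficiently general position in $(\mathbb{R}^d)^{7}$, I expect this count to be $\pm1$ for every $d$, which ejects the Massey product from its indeterminacy coset and defeats formality.

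The principal obstacle is exactly this last step: carrying out the secondary-linking computation rigorously and, above all, making it uniform in $d$. It requires simultaneous control of the Goresky--MacPherson / de Concini--Procesi model (whose signs flip with the parity of $d$), of the precise shape of the detecting cycle $w$ and of the cobounding chains (which must be transverse to $w$, and whose intersection number must be shown \emph{not} to collapse for dimensional reasons as $d$ grows), and of the indeterminacy of the product. I anticipate that, once $d=2$ is matched against Miller's calculation as a check, the passage to arbitrary $d\ge 2$ reduces to sign- and transversality-bookkeeping inside the de Concini--Procesi framework, with the Borel--Moore / Poincar\'e-duality pairing playing, for this open manifold, the role that integration against the fundamental class plays for a closed one.
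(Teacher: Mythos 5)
Your ``if'' direction and your general framework (Borel--Moore homology, Poincar\'e--Lefschetz duality for the open manifold, intersection-theoretic evaluation of the triple product, the observation that the ring only depends on $d$ through a regrading and signs) all match the paper, and your reduction to $n=7$ via the section of the forgetful map is sound, if unnecessary (the paper's detecting classes only involve the labels $1,\dots,7$, so it works directly in $M^{(3)}_d(n)$). The fatal problem is your choice of Massey product. The triple $\big\langle \alpha_{\{1,2,3\}},\alpha_{\{3,4,5\}},\alpha_{\{5,6,7\}}\big\rangle$, with each entry the single degree-$(2d-1)$ generator dual to an atom of the intersection lattice, is \emph{always trivial}: this is exactly Proposition~\ref{ternarioscero} of the paper (and Example~\ref{ejemplomassey0} treats your precise triple). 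The reason is geometric: after re-representing the elementary classes using the Jacobi relation of Remark~\ref{cuadrote}, the dual linear cells $c_\kappa\cap c_\lambda$ and $c_\lambda\cap c_\mu$ can be made empty, so the Massey system admits the zero choice of cobounding chains and $0$ lies in the coset. Consequently no secondary linking number you compute against any test cycle $w$ can ``eject the product from its indeterminacy'': whatever you obtain is itself an element of the indeterminacy subgroup. Your plan would fail at exactly the step you flag as the principal obstacle, and no amount of sign or transversality bookkeeping repairs it.

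The idea you are missing is the one the paper is built around: the third slot must be replaced by the sum of the four atom classes associated to all $3$-subsets of $\{4,5,6,7\}$, as in Theorem~\ref{ptmnt}. For that input the geometric computation produces the single basic $3$-forest on the left of (\ref{generalindeterminacysimpler}) as a representative, and non-triviality is then established not by pairing against a complementary cycle but by a combinatorial parity argument: Lemma~\ref{inprepa} forces every product $\beta\cdot\mathfrak{S}$ with the sum $\mathfrak{S}$ to expand as an \emph{even} number of basic forests, so it can never equal a single basic forest. In short, your duality machinery is the right engine, but the specific product you feed it is provably degenerate, and the actual content of the theorem lies in engineering a non-degenerate one and in controlling its indeterminacy combinatorially rather than by a secondary intersection number.
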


\red{Theorem \ref{driving0} stands in contrast with the known fact\footnote{\red{The formality of $\text{Conf}(\mathbb{R}^d,n)$ is proved by Arnold in \cite{Arn69} for $d=2$ through an \emph{ad-hoc} argument using complex analysis, and by Kontsevich in \cite{MR1718044} for general $d\hspace{.2mm}$, as part of a simplification of Tamarkin's algebraic proof of Kontsevich Formality Theorem. The formality of complements of the indicated complex arrangements is proved by Feichtner and Yuzvinsky in \cite{MR2183223} through direct construction of explicit quasi-isomorphisms among various rational models, the last of which is shown to be formal.}} that all classical configuration spaces $M^{(2)}_d(n)=\text{Conf}(\mathbb{R}^d,n)$ are rationally formal, just as are all complements of arrangements of arbitrary complex linear subspaces with geometric lattice of intersections.}

\red{In view of (\ref{ratfor}), Theorem \ref{driving0} will be proved once we show the rational non-formality of $M^{(3)}_d(n)$ for $n>6$. Such a task will be achieved through the identification of non-trivial triple Massey products in $M^{(3)}_d(n)$ (Theorem~\ref{ptmnt} and Corollary \ref{muygeneral} at the end of the paper).}

Miller's paper ends by conjecturing that
\begin{equation}\label{millersconjecture}
\mbox{\emph{$M^{(k)}_{\red{d}}(n)$ has non-trivial Massey products for $n>k(k-1)$ \red{and $d=2$.}}}
\end{equation}
Presumably, the non-trivial Massey products conjectured by \red{Miller would} be of order precisely $k$. \red{In view of the results in this paper, it is tempting to think that the Poincar\'e-duality technique could actually lead to a proof of (\ref{millersconjecture}), even without the restriction on $d$.}

Miller's computations \red{of} cup-\red{products} and Massey-product\red{s on the complex manifolds $M^{(k)}_2(n)$} are based on Yuzvinsky's DGA structure on the relative atomic complex for $M^{(k)}_d(n)$ introduced by Vassiliev (\cite{Vassiliev1993,Yuzvinsky99}). With such an approach, much effort is required to show \red{cohomological} non-triviality of \red{a given cocycle. As} a consequence, the extent of results in~\cite{Miller2012} get somehow limited. \red{We have circumvented} the problem by following a more direct route. Namely, as originally noted in~\cite{Massey1968}, in many cases Poincar\'e duality and intersection theory (using Borel-Moore homology in our non-compact case) can be used to evaluate Massey products. Actually, Dobrinskaya and Turchin use Poincar\'e duality in~\cite{dobri2015} to give a fully workable description of the cohomology ring of $\mdkn$. \red{Here\blue{,} we build on their approach} in order to \red{get an effective assessment of} Massey products. 

\red{We have not touched the classification by rational formality of manifolds $\mdkn$ with $k>3$, as such a problem is far more involved, possibly not within reach with current technology. Indeed, assuming $k>3$, we see that} condition ~(\ref{lowparticles}) is \red{strictly} less restrictive than the case $d=2$ of\espacio(\ref{dimcond}). \red{So,} \rojo{even for $d=2$,} there are manifolds $M^{(k)}_{\blue{d}}(n)$ which, \red{despite not supporting non-trivial Massey products of any order, are not known to be rationally formal through a simple application of\espacio(\ref{dimcond}) (or \blue{through} any other method known to the authors). Even worst, the gap would not improve much even if (\ref{conectividad}) ---and therefore (\ref{dimcond})--- could be improved by a condition of the sort $\dim(X)\leq 4c+\varepsilon$, i.e., a potential analogue (for non-compact manifolds!) of the results in \cite{MR0528561, MR2238645}.}

\red{As noted above, the hypotheses $d\geq2$ and $n>k\geq3$ will be in force throughout the paper.} 

\section{\red{The cohomology of \texorpdfstring{$\mdkn$}{non-k-equal manifolds}: Additive structure}}\label{secciondescripciondecohomologia}
In this section we recall the \red{geometric-combinatorial} description of the \red{additive structure of the} cohomology of $\mdkn$, \red{as} given in~\cite{dobri2015}. \rojo{We also shed additional light on some points in Dobrinskaya-Turchin's constructions.} All \red{homology and} cohomology groups will be taken with \red{either integer ($\mathbb{Z}$) or mod-2 ($\mathbb{Z}_2$) coefficients.} Assertions made without specifying \red{coefficients} are meant to hold for both options. \red{While $\mathbb{Z}$ coefficients are needed to set descriptions correctly, Massey product computations in Section \ref{mps} will use exclusively mod 2 coefficients for the sake of simplicity, so orientations and sign specifications below can and will safely} be ignored.

\begin{definition}\label{defkforesto}
\begin{itemize}
\item[\red{(a)}] A $k$-forest on $\nn$ (or simply a $k$-forest) is an acyclic simple graph \red{which is \emph{$\nn$-bipartitioned\hspace{.3mm}} in the sense that it has} two types of vertices, square ones and round ones, each containing a certain subset of\espacio$\nn\hspace{.2mm}$, \red{and in such a way that the subsets of integers inside the various vertices partition $\nn$}. A square vertex must contain $k-1$ elements of $\nn$, and cannot be an isolated vertex. \red{In} fact, the set of immediate neighbors of a square vertex must contain a round vertex. A round vertex must contain a single element of $\nn$, and must be either an isolated vertex or have valency 1, in which case it must be connected to a square vertex. Square vertices are declared to have degree $d(k-2)$, while edges are declared to have degree $d-1$. The degree \red{$\deg(T)$} of a $k$-forest \red{$T$} is then defined as the sum of the degrees of \red{the} square vertices and edges \red{of $T$}.
\item[\red{(b)}] An orientation for a $k$-forest consists of three ingredients:
\begin{itemize}
    \item[\red{(b.1)}] An orientation for each edge;
    \item[\red{(b.2)}] A total ordering for the elements inside each square vertex;
    \item[\red{(b.3)}] A total ordering for the \emph{orientation set}, i.\,e., the set consisting of all edges and all square vertices.
\end{itemize}
\end{itemize}
\end{definition} 

\begin{theorem}[{\cite[Theorem 6.1]{dobri2015}}]\label{sum} Let \red{$R\in\{\mathbb{Z},\mathbb{Z}_2\}$. As a graded $R$-module,} $H^*(\mdkn)$ is free and generated by oriented $k$-forests on\espacio$\hspace{.1mm}\nn$ subject to the relations listed below.
\begin{enumerate}
\item Orientation relations:
    \begin{itemize}
        \item[(i)] Permuting the order of the orientation set introduces \red{a} Koszul sign induced by the permutation (with respect to the degrees of the elements of the orientation set).
        \item[(ii)] A permutation $\sigma \in \Sigma_{k-1}$ of the elements inside a square vertex introduces the sign $\epsilon(\sigma)^{d}$, where $\epsilon(\sigma)$ stands for the sign of $\sigma$.
        \item[(iii)] Reversing the orientation of an edge introduces the sign $(-1)^d$.
    \end{itemize}
\item Three-term relations:
    \begin{center}
    \scalebox{.92}{\begin{tikzpicture}
    \node (A) [rectangle, draw, minimum width=8mm] at (-1,0) {$A$};
    \node (B) [rectangle, draw, minimum width=8mm] at (0,1) {$B$};
    \node (C) [rectangle, draw, minimum width=8mm] at (1,0) {$C$};
    \draw[line width=0.5pt,-latex] (A.north)--(B.south west)node[midway, xshift=-2mm, yshift=1mm]{\tiny $1$}; 
    \draw[line width=0.5pt,-latex] (B.south east)--(C.north)node[midway,xshift=2mm,yshift=1mm]{\tiny $2$};
    \begin{scope}[xshift=3.75cm]
    \node (A2) [rectangle, draw, minimum width=8mm] at (-1,0) {$A$};
    \node (B2) [rectangle, draw, minimum width=8mm] at (0,1) {$B$};
    \node (C2) [rectangle, draw, minimum width=8mm] at (1,0) {$C$};
    \draw[line width=0.5pt,latex-] (A2.east)--(C2.west)node[midway,below]{\tiny $2$}; 
    \draw[line width=0.5pt,-latex] (B2.south east)--(C2.north)node[midway, xshift=1mm, yshift=1.75mm] {\tiny $1$};
    \end{scope}
    \begin{scope}[xshift=7.5cm]
    \node (A3) [rectangle, draw, minimum width=8mm] at (-1,0) {$A$};
    \node (B3) [rectangle, draw, minimum width=8mm] at (0,1) {$B$};
    \node (C3) [rectangle, draw, minimum width=8mm] at (1,0) {$C$};
    \draw[line width=0.5pt,-latex] (A3.north)--(B3.south west)node[midway,xshift=-1.25mm, yshift=1.8mm]{\tiny $2$};
    \draw[line width=0.5pt,latex-] (A3.east)--(C3.west)node[midway,below]{\tiny $1$};
    \end{scope}
    \coordinate (D) at ($(C.east)!0.5!(A2.west)$);
    \coordinate (E) at ($(C2.east)!0.5!(A3.west)$);
    \node at ([yshift=6mm]D) {$+$};
    \node at ([yshift=6mm]E) {$+$};
    \node at ([shift={(-110mm,6mm)}]C3.east) {$0\hspace{3mm}=$};
    \end{tikzpicture}}
    \end{center}
    The three pictures are local in the sense that we have three oriented $k$-forests that are identical except for the disposition \red{and ordering of the two} oriented edges connecting vertices $A$, $B$ and $C$. \red{The relative orderings of each such pair of oriented edges} in the corresponding orientation sets are indicated by the attached numbers.
\item Generalized Jacobi relations:
    \begin{center}
    \begin{tikzpicture}
    \node at (-.42,-1.5) {$\cdots$}; \node at (1.77,-1.5) {$\cdots$};
    \node (rec) [rectangle, draw] at (0,0) {$i_1$ $i_2$ $\cdots$ $i_{k-2}$ $j_\ell$};
    \node (r1) [circle, draw, inner sep=0.5pt,minimum width=7.5mm] at (-2,-1.5) {$j_{\scriptscriptstyle 1}$};
    \node (r2) [circle, draw, inner sep=0.5pt,minimum width=7.5mm] at (-1.1,-1.5) {$j_{\scriptscriptstyle 2}$};
    \node (r3) [circle, draw, inner sep=0.5pt,minimum width=7.5mm] at (.2,-1.5) {$j_{\scriptscriptstyle\ell-1}$};
    \node (r4) [circle, draw, inner sep=0.5pt,minimum width=7.5mm] at (1.1,-1.5) {$j_{\scriptscriptstyle\ell+1}$};
    \node (r5) [circle, draw, inner sep=0.5pt,minimum width=7.5mm] at (2.4,-1.5) {$j_{\scriptscriptstyle\red{\omega}}$};
    \draw[line width=0.5pt,-latex] (rec.south west)--(r1.north)node[midway,xshift=.8mm,yshift=3.3mm]{\tiny $1$};
    \draw[line width=0.5pt,-latex] ($(rec.south west)!0.5!(rec.south)$)--(r2.north)node[midway,xshift=-.1mm,yshift=2.5mm]{\tiny $2$};
    \draw[line width=0.5pt,-latex] (rec.south)--(r3.north)node[midway,xshift=-5mm,yshift=-0.5mm]{\tiny $\cdots$};
    \draw[line width=0.5pt,-latex] ($(rec.south east)!0.5!(rec.south)$)--(r4.north);
    \draw[line width=0.5pt,-latex] (rec.south east)--(r5.north)node[midway,xshift=-5mm,yshift=-0.5mm]{\tiny $\cdots$}node[midway,xshift=.45mm,yshift=3.3mm]{\tiny $\red{\omega}-1$};
    \node at (-3.5,-0.75) {$0\hspace{2mm}=\hspace{2mm}\displaystyle \sum_{\ell=1}^{\red{\omega}} (-1)^{\ell(d-1)}$};
    \end{tikzpicture}
    \end{center}
    \red{The $\omega$} pictures are again local. Moreover, in each of the global pictures, the square vertex cannot be connected to other (non\blue{-}shown) round vertices.
\end{enumerate}
\end{theorem}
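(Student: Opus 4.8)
The plan is to compute $H^{*}(\mdkn)$ by Poincar\'e duality, trading it for the Borel--Moore homology of the arrangement $\adkn$ and then resolving that homology through the intersection poset. Since $\mdkn=(\mathbb{R}^{d})^{n}\setminus\adkn$ is an orientable open $dn$-manifold, Poincar\'e duality gives $H^{i}(\mdkn;R)\cong\bar H_{dn-i}(\mdkn;R)$, and the long exact sequence in Borel--Moore homology relating $\adkn$, $(\mathbb{R}^{d})^{n}$ and $\mdkn$ --- using that $(\mathbb{R}^{d})^{n}$ is Borel--Moore acyclic away from its top degree and that $\dim_{\mathbb{R}}\adkn=dn-d(k-1)<dn-1$ because $d(k-1)\geq4$ --- collapses to
\[
H^{i}(\mdkn;R)\ \cong\ \bar H_{dn-i-1}\big(\adkn;R\big)\qquad(i>0).
\]
So one is reduced to the Borel--Moore homology of a union of linear diagonal subspaces, and I would compute it by stratifying $\adkn=\bigcup_{|I|=k}A_{I}$ by partition type and running the Goresky--MacPherson / Mayer--Vietoris machine over the intersection lattice, which is the $k$-equal partition lattice $\Pi_{n,k}$ of partitions of $\nn$ all of whose non-singleton blocks have size $\geq k$. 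For $\pi>\hat{0}$ the subspace $A_{\pi}$ has real codimension $d\sum_{B}(|B|-1)$, and the $\pi$-contribution is controlled by the reduced homology of the order complex $\Delta(\hat{0},\pi)$ of the open lower interval; for a single block of size $b$ this complex is a wedge of spheres in the dimensions $b-3-t(k-2)$, $1\leq t\leq\lfloor b/k\rfloor$, the (equivariant) Betti-number counts being those of Bj\"orner--Welker and Sundaram--Wachs. The parameter $t$, which records how many overlapping $k$-sets are needed to assemble a block, is precisely what the number of square vertices inside a connected component of a $k$-forest will encode: a component with $t$ squares on a $b$-element block has $b-1-t(k-2)$ edges, hence degree $(b-1)(d-1)+t(k-2)$, which is exactly the cohomological degree in which that homology lands after the duality above, and a product of such components over the non-singleton blocks of $\pi$ matches the corresponding product decomposition of $\tilde H_{*}(\Delta(\hat{0},\pi))$.

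With this dictionary, I would produce the asserted generators by realizing each oriented $k$-forest $T$ as an explicit Borel--Moore cycle dual to a cohomology class of $\mdkn$: a square vertex $\{i_{1},\dots,i_{k-1}\}$ imposes the small diagonal $x_{i_{1}}=\cdots=x_{i_{k-1}}$, of codimension $d(k-2)$; an edge imposes a codimension-$(d-1)$ linking condition between the two $\mathbb{R}^{d}$-values it joins; an isolated round vertex imposes nothing. The three ingredients of an orientation --- orientations of edges, orderings inside square vertices, and the ordering of the orientation set --- then fix the signs of these cycles and of the product decomposition just described, which is what forces the orientation relations (i)--(iii), all of which trivialize mod $2$. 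It remains to check that these classes span $H^{*}(\mdkn)$ and that their only further relations are the three-term relation --- the $k$-equal incarnation of the Arnold / Orlik--Solomon relation, obtained by sweeping a linking sphere across a diagonal so that the two edges at a common vertex get reorganized --- and the generalized Jacobi relation on the $\omega$ round leaves of a square vertex, which one exhibits as the boundary of a suitable chain. An equivalent, purely algebraic route (the one underlying Miller's computations) would instead model $H^{*}(\mdkn)$ by Yuzvinsky's DGA structure on Vassiliev's relative atomic complex and read the same data off $\Pi_{n,k}$; the bookkeeping of signs and relations there is at least as heavy, which is why the Poincar\'e-duality presentation --- with its ready supply of geometric cocycles --- is the one to carry forward for the later Massey-product computations.

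The step I expect to be the genuine obstacle is showing that relations (i)--(iii), the three-term relations and the generalized Jacobi relations are \emph{exhaustive}. Once one knows that the forest cycles span $H^{*}(\mdkn)$ and that the listed relations hold among them, the presented module surjects onto $H^{*}(\mdkn)$, so what remains is to bound its graded rank from above by the Bj\"orner--Welker Betti numbers --- a purely combinatorial count on the presentation. I would organize it by filtering $H^{*}(\mdkn)$ along $\Pi_{n,k}$, so that each associated graded piece becomes a product over the non-singleton blocks of a partition and the whole question reduces to the single-block case $\pi=\hat{1}$ of $\Pi_{b,k}$; there the claim is a self-contained presentation of $\tilde H_{*}(\Delta(\bar\Pi_{b,k}))$ by the $k$-forest components on $[b]$ modulo the three-term and Jacobi relations, whose rank one pins down using the M\"obius function of $\Pi_{b,k}$ together with Bj\"orner--Welker's identification of that homology with a wedge of spheres. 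This last rank bound --- showing the relations cut the free module down to exactly the right size, no more and no less --- is the delicate point, and it is exactly where one cannot get away with a soft argument.
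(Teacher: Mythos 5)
Your overall architecture agrees with the paper's source for this result (Dobrinskaya--Turchin, \cite[Theorem 6.1]{dobri2015}) in its first half: there, too, the generators are produced by realizing each oriented $k$-forest as a co-oriented linear cell whose Borel--Moore fundamental class is Poincar\'e-dualized, and the orientation, three-term and generalized Jacobi relations are verified geometrically (the latter as boundaries of cells whose defining graph has a square vertex with only $k-2$ labels, cf.\ Remark~\ref{squarewithoutrounds}). Where you genuinely diverge is at the completeness step. The cited proof does \emph{not} run a Goresky--MacPherson/Bj\"orner--Welker rank count: after reducing an arbitrary forest to a combination of basic ones via the relations (which gives spanning), it exhibits an explicit basis of $H_*(\mdkn)$ and checks that the pairing of the basic $k$-forest classes against it is the identity matrix. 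That single computation delivers linear independence, freeness over $\mathbb{Z}$, and exhaustiveness of the relations all at once, with no need to match cardinalities against the M\"obius-function data of $\Pi_{n,k}$.

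This difference matters because the step you yourself flag as ``the genuine obstacle'' --- bounding the rank of the presented module above by the Bj\"orner--Welker Betti numbers --- is exactly the part you do not carry out, and it is not soft: one must enumerate basic $k$-forests per degree and per block structure and match the count with the wedge-of-spheres dimensions $b-3-t(k-2)$, $1\leq t\leq\lfloor b/k\rfloor$, block by block. Your dictionary is consistent (a component with $t$ squares on a $b$-element block has $b-t(k-2)-1$ edges, hence degree $(b-1)(d-1)+t(k-2)$, which matches the Goresky--MacPherson shift), so the route is viable in principle, but as written the proposal establishes only a surjection from the presented module onto $H^*(\mdkn)$ and leaves injectivity to an unexecuted enumeration. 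If you want a proof in the spirit of what the later sections of the paper actually use, replace that count by the dual-basis pairing argument: it is both shorter and the reason the forest classes come equipped with the explicit cycles $c_T$ needed for the intersection-theoretic Massey-product computations.
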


In pictures like the one above, we agree that elements inside a square vertex are written increasingly, from left to right, following \red{ingredient (b.2) of the intended orientation.} \red{Note that,} in the orientation set, the transposition of a square vertex and an \red{oriented} edge produces a positive Koszul sign since $d(k-2)(d-1)$ is even. Thus, the ordering in the orientation set is really a pair of orderings, one for square vertices and another for \red{oriented} edges.

Orientation and three-term relations can be used to express any oriented $k$-forest as a linear combination of oriented \emph{\red{linear}} $k$-forests, i.e., oriented $k$-forests whose non-trivial\footnote{\red{In an oriented $k$-forest, a (connected) component that reduces to an isolated round vertex is said to be trivial.}} components are trees with square vertices lying along an embedded arc, as in Figure~\ref{semilinear}.
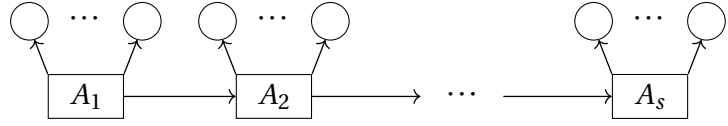
\begin{figure}[ht]
\centering
\begin{tikzpicture} 
\node (r1) [circle, draw, minimum width=5mm] at (-0.75,1) {};
\node (r2) [circle, draw, minimum width=5mm] at (0.75,1) {};
\node (A0) at (5,0) {$\;\;\;\cdots\;\;\;$};
\node (A1) [rectangle, draw, minimum width=1cm] at (0,0) {$A_1$};
\draw[->, line width=0.5pt] (A1.north west)--(r1.285);
\node at (0,1) {$\cdots$};
\draw[->, line width=0.5pt] (A1.north east)--(r2.255);
\begin{scope}[xshift=2.5cm]
\node (r3) [circle, draw, minimum width=5mm] at (-0.75,1) {};
\node (r4) [circle, draw, minimum width=5mm] at (0.75,1) {};
\node (A2) [rectangle, draw, minimum width=1cm] at (0,0) {$A_2$};
\draw[->, line width=0.5pt] (A2.north west)--(r3.285);
\node at (0,1) {$\cdots$};
\draw[->, line width=0.5pt] (A2.north east)--(r4.255);
\end{scope}
\draw[->, line width=0.5pt] (A1.east)--(A2.west);
\begin{scope}[xshift=7.5cm]
\node (r5) [circle, draw, minimum width=5mm] at (-0.75,1) {};
\node (r6) [circle, draw, minimum width=5mm] at (0.75,1) {};
\node (A3) [rectangle, draw, minimum width=1cm] at (0,0) {$A_s$};
\draw[->, line width=0.5pt] (A3.north west)--(r5.285);
\node at (0,1) {$\cdots$};
\draw[->, line width=0.5pt] (A3.north east)--(r6.255);
\end{scope}
\draw[->, line width=0.5pt] (A2.east)--(A0.west);
\draw[->, line width=0.5pt] (A0.east)--(A3.west);
\end{tikzpicture}
\caption{A non-trivial component of a linear $k$-forest.}
\label{semilinear}
\end{figure}
\red{Similarly,} orientation and generalized Jacobi relations can be used to \red{express any oriented} $k$-forest as a linear combination of \red{oriented} \emph{\red{ordered\hspace{.2mm}}} $k$-forests, i.e., those satisfying that the largest of the integers inside round vertices attached to \red{a given} square vertex \red{$A$} is larger than any of the integers inside \red{$A$.} \red{The two rewriting processes can then be coordinated so to yield basis elements:}

\begin{definition}\label{bosquesbasicos}
An \red{oriented} ordered linear $k$-forest is called \emph{basic} provided its non-trivial components satisfy the following conditions, where \red{notation is as in} Figure~\ref{semilinear}:
\begin{itemize}
\item Edge orientations are as indicated in Figure~\ref{semilinear}.
\item \blue{According to their orientation order: }
 $A_1<A_2<\cdots <A_s$.
\item For a portion of the form
\begin{center}
\begin{tikzpicture} 
\node (r1) [circle, draw, minimum width=5mm] at (-0.75,1) {};
\node (r2) [circle, draw, minimum width=5mm] at (0.75,1) {};
\node (A1) [rectangle, draw, minimum width=1cm] at (0,0) {$A_i$};
\draw[line width=0.5pt] (A1.north west)--(r1.285);
\node at (0,1) {$\cdots$};
\draw[line width=0.5pt] (A1.north east)--(r2.255);
\end{tikzpicture}
\end{center}
the elements inside the square vertex appear in their natural order. Likewise, the ordering (in the orientation set) of the edges attaching round vertices to the square vertex agrees with the natural order of the integers inside the round vertices. Furthermore, if $i>1$, then the edge from $A_{i-1}$ to $A_i$ is smaller than all edges connecting $A_i$ to round vertices. Likewise, if $i<s$, then the edge from $A_i$ to $A_{i+1}$ is larger than all edges connecting $A_i$ to round vertices.
\item The minimal $m\in\nn$ inside the vertices of the linear tree component $C$ in Figure~\ref{semilinear} appears either inside $A_1$ or inside a round vertex attached to $A_1$. Furthermore, if $m'$ is the corresponding minimal element in another linear tree component $C'$ of the $k$-forest, and $m<m'$, then orientation elements associated to $C$ are smaller than orientation elements associated to~$C'$.
\end{itemize}
\end{definition}

\addtocounter{theorem}{-2}
\begin{theorem}[Continued]
Basic $k$-forests yield a graded basis for the cohomology of $\mdkn$.
\end{theorem}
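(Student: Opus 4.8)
The plan is to deduce the statement from the presentation of $H^*(\mdkn)$ recorded in Theorem~\ref{sum}, by first showing that every oriented $k$-forest is congruent, modulo the three families of relations, to an $R$-linear combination of basic $k$-forests, and then showing that the basic $k$-forests so produced are $R$-linearly independent. Since Theorem~\ref{sum} already gives that $H^*(\mdkn)$ is a free $R$-module of finite rank in each degree, the second task reduces to a rank count: once spanning is known, it suffices to verify that, in each cohomological degree $q$, the number of basic $k$-forests of degree $q$ equals $\dim_R H^q(\mdkn)$ --- a quantity available independently of Theorem~\ref{sum} from the classical Goresky--MacPherson-type computations of the Betti numbers of complements of diagonal subspace arrangements (equivalently, from the Poincar\'e series recorded in \cite{dobri2015}). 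Over a field this is immediate, and over $\mathbb{Z}$ it still suffices because a spanning set of cardinality equal to the rank of a finitely generated free module is automatically a basis.

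For the spanning part I would set up an explicit straightening algorithm and prove it terminates by exhibiting a well-founded complexity measure on oriented $k$-forests that strictly decreases under each rewriting move. Concretely: first use orientation relation (iii) to normalize every edge orientation to the standard one of Figure~\ref{semilinear}; next use the three-term relations to eliminate square vertices having two or more square neighbours, reducing to oriented \emph{linear} $k$-forests, and, still within the three-term relations, to reverse the spine of a linear tree component so that its minimal label sits at the $A_1$-end; then use the generalized Jacobi relations to move, whenever necessary, the largest label in the neighbourhood of a square vertex out of that vertex, reaching \emph{ordered} linear $k$-forests; finally use orientation relations (i) and (ii) to put the labels inside each square vertex in natural order and to reorganize the orientation set into the canonical order of Definition~\ref{bosquesbasicos}, at the cost of the Koszul and $\epsilon(\sigma)^{d}$ signs. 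The complexity measure would be lexicographic: first the total number of square--square adjacencies, then a count of ``Jacobi defects'' (square vertices whose maximal neighbouring label lies inside the vertex), then the number of spine-reversals still required, then the number of transpositions separating the current orientation set from the canonical one. The delicate point is that interleaving the three-term and Jacobi phases is harmless: a Jacobi move never creates a square--square adjacency (the square vertex there may be connected to other square vertices, only not to unshown round ones, and the move only redistributes round labels), and a three-term move alters edges and edge-orderings but not vertex contents, hence does not change the Jacobi-defect count; so the phases can simply be nested, avoiding a full confluence (diamond-lemma) analysis.

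The main obstacle I anticipate is precisely this coordination of the two rewriting processes together with the sign bookkeeping: one must check that the output of the algorithm does not depend on which legal move is applied when --- or, more economically, that the prescribed nesting of phases always terminates at the same normal form --- and that no hidden relation among basic forests is created, a subtlety already foreshadowed by the remark following Theorem~\ref{sum} that the ordering of the orientation set is really a pair of independent orderings. If carrying the confluence analysis through turns out to be unwieldy, the alternative --- and the one most in the spirit of the present paper --- is to bypass linear independence altogether and instead, for each basic $k$-forest $T$, produce an explicit Borel--Moore homology class $z_T$ of $\mdkn$ (the closure of a suitable coordinate stratum, oriented according to $T$) such that the intersection pairing is triangular with respect to the complexity order above, i.e. $\langle T, z_T\rangle=\pm1$ while $\langle T', z_T\rangle=0$ for every basic forest $T'$ of strictly smaller complexity. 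Combined with the spanning statement, this forces the basic forests to be a basis, and it is essentially the Poincar\'e-duality mechanism used later for the Massey-product computations, so establishing it here also prepares the ground for Section~\ref{mps}.
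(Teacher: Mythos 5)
Your proposal is essentially sound as a strategy, but note first that the paper does not actually prove this statement: it is imported from \cite[Theorem 6.1]{dobri2015}, and the surrounding text only sketches Dobrinskaya--Turchin's two-stage argument, namely (i) realize each oriented $k$-forest $T$ as the Poincar\'e dual of the Borel--Moore fundamental class of the linear cell $c_T$, and (ii) check that the classes of basic $k$-forests pair to the identity matrix against an explicit basis of the ordinary homology of $\mdkn$. Your fallback option --- triangularity of a duality pairing against explicitly constructed cycles $z_T$ --- is precisely this mechanism, except that the source uses the Kronecker pairing with ordinary homology rather than the intersection pairing on Borel--Moore classes; for the purpose of forcing linear independence the two are interchangeable. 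Your primary route genuinely differs in how independence is obtained: a rank count against the Betti numbers of $\mdkn$ known independently from Goresky--MacPherson-type computations. This is logically valid (and your observation that, over $\mathbb{Z}$, a spanning set of cardinality equal to the rank of a finitely generated free module is automatically a basis is correct), but it trades the geometric pairing for a nontrivial combinatorial identity --- that the number of basic $k$-forests of degree $q$ equals $\mathrm{rank}\, H^q(\mdkn)$ --- which you assert rather than prove; as written, that enumeration is the one real gap in the primary route, and closing it is arguably no cheaper than exhibiting the dual cycles. Two smaller points: confluence of your straightening algorithm is not needed at all --- once linear independence is secured by either method, mere termination of some rewriting strategy gives spanning, so the diamond-lemma worry can be dropped; and your reason why the phases nest (three-term moves touch only square--square edges and their orderings, while generalized Jacobi moves only redistribute labels between a square vertex and its attached round vertices, preserving the tree shape) is the correct one.
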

\addtocounter{theorem}{1}

Theorem~\ref{sum} is proved in~\cite{dobri2015} in two stages. First, a set of cohomology classes parametrized by \red{oriented} $k$-forests is constructed as \red{Borel-Moore} Poincar\'e duals of fundamental classes of suitably chosen oriented submanifolds \red{of $\mdkn$. See the revision below. It is then checked that the cohomology classes resulting from basic $k$-forests} give the identity matrix when paired with an explicit basis for the homology of $\mdkn$. For the purposes of this work, \red{the rest of the section is devoted to recalling and illustrating the connection between oriented $k$-forests and their corresponding Poincar\'e-dual fundamental classes} in the Borel-Moore homology of $\mdkn$.

\red{Consider the projection $p_1:\mathbb{R}^d \to \mathbb{R}^{d-1}$ onto} the last $d-1$ coordinates, i.e., $p_1(x)=(x^{(2)},\ldots,x^{(d)})$, for $x=(x^{(1)},x^{(2)},\ldots,x^{(d)})$. \red{An oriented} $k$-forest $T$ determines \red{a convex domain $c_T$ of a vector subspace $C_T$ of $(\mathbb{R}^d)^n$. Explicitly, $C_T$ consists} of all tuples $(x_1,\ldots,x_n)\in(\mathbb{R}^d)^n$ satisfying
\begin{itemize}
    \item[(i)] \red{if} $i$ and $j$ in $\mathbf{n}$ lie in the same square vertex, then $x_i=x_j$\red{;}
    \item[(ii)] \red{if} two vertices $A$ and $B$ of $T$ are connected by an edge oriented from $A$ to $B$, then for all $i\in A$, $j\in B$, one has $p_1(x_i) = p_1(x_j)$.
\end{itemize}
Note that, if $i$ and $j$ lie in the same connected component of $T$, then the condition $p_1(x_i)=p_1(x_j)$ holds true for the points $(x_1,\ldots,x_n)$ in $C_T$. \red{The domain $c_T$, also referred to as a linear cell, is defined by the equalities above together with the inequalities
\begin{equation}\label{lado}
x_i^{(1)} \leq x_j^{(1)}
\end{equation}
in the case of (ii). Note that the degree of $T$, $\deg(T)$, in Definition~\ref{defkforesto} is the codimension of both $C_T$ and $c_T$ in $(\mathbb{R}^d)^n$.}

\red{The locally compact linear cell $c_T$ has boundary contained in $\adkn$ and, thus, represents} the Borel-Moore fundamental class \red{in $${H}_{dn-\deg(T)}^{\text{BM}}\big(\mdkn\big)$$ of the submanifold $\text{Int}(c_T)$ of $\mdkn$ given by the interior of $c_T$. We say that the submanifold $\text{Int}(c_T)$ is \emph{encoded} by $T$.}

\red{The orientation} ingredients of $T$ determine \red{(as illustrated below)} a co-orientation of $C_T$ \red{in $(\mathbb{R}^d)^n$ and, thus, of $\text{Int}(c_T)$ in $\mdkn$.} We thus get a \red{$\deg(T)$-dimensional} cohomology class in $\mdkn$ \red{which is Poincar\'e-dual to the Borel-Moore fundamental class of $\text{Int}(c_T)$ in $\mdkn$. As suggested by Theorem\espacio \ref{sum}, the resulting cohomology class is denoted\footnote{\red{While oriented $k$-forests can be considered as elements in the cohomology of $\mdkn$, two distinct forests might represent the same class, see for instance Remark\espacio\ref{cuadrote}. Of course, such a faithfulness problem does not hold in the case of basic $k$-forests.}} by $T$.}

For example, the \red{basic} $4$-forest $T\in H^8(M_{3}^{(4)}(7))$ given by
\begin{center}
\begin{tikzpicture}[baseline=(current bounding box.center)]
\node (A) [rectangle, draw, minimum width=10mm] at (-1,0) {$1\enskip 2\enskip 4$};
\node at (-1.85,.2) {\tiny$1$};
\node at (-.4,.48) {\tiny$2$};
\node (r1) [circle, draw, inner sep=0.5pt,minimum width=5mm] at (-0.6,1) {$6$};
\draw[line width=0.5pt,-latex] ([xshift=4mm]A.north)--(r1.south);
\end{tikzpicture}
\end{center}
corresponds to the \red{linear cell} $c_T$ consisting of all tuples $(x_1,\dots,x_7)\in(\mathbb{R}^3)^7$ such that $x_1 = x_2 = x_4$, $p_1(x_1)=p_1(x_6)$ and $x_1^{(1)} \leq x_6^{(1)}$. The co-orientation of \red{$C_T$,} i.e., the orientation of the normal bundle of $\red{C_T}\hookrightarrow(\mathbb{R}^3)^7$, is \blue{induced} through the surjection $\pi_T\colon(\mathbb{R}^3)^7\to\mathbb{R}^{\deg(T)}=(\mathbb{R}^3)^2\times\mathbb{R}^2$ with components $\pi_{\mbox{\tiny$\square$}}\colon(\mathbb{R}^3)^7\to(\mathbb{R}^3)^2$ and $\pi_{\mbox{\large$\circ$}}\colon(\mathbb{R}^3)^7\to\mathbb{R}^2$ given by $$\pi_{\mbox{\tiny$\square$}}(x_1,\ldots\blue{,} x_7)= (x_2-x_1,x_4-x_1)\mbox{ \;and \;} \pi_{\mbox{\large$\circ$}}(x_1,\ldots, x_7)=p_1(x_6-x_1).$$ Note that \blue{$C_T$} is the kernel of \red{$\pi_T$} \blue{and\rojo{,} hence\rojo{,}} the tangent space of $c_T$\blue{. Moreover, }following the orientation of $T$, the first and second components of $\pi_T$ account, respectively, for the square vertex and the edge \blue{in}\espacio$T$.

In such a setting, sums \red{of (co)homology classes} correspond to unions of \red{representing linear cells,} while signs arise from a consistent management \red{of orientations.} For example, \red{consider} the three-term relation
\begin{center}
    \begin{tikzpicture}
    \node (A) [rectangle, draw, minimum width=8mm] at (-1,0) {$A$};
    \node (B) [rectangle, draw, minimum width=8mm] at (0,1) {$B$};
    \node (C) [rectangle, draw, minimum width=8mm] at (1,0) {$C$};
    \draw[line width=0.5pt,-latex] (A.north)--(B.south west)node[midway, xshift=-2mm, yshift=1mm]{\tiny $1$}; 
    \draw[line width=0.5pt,-latex] (B.south east)--(C.north)node[midway,xshift=2mm,yshift=1mm]{\tiny $2$};
    \begin{scope}[xshift=4cm]
    \node (A2) [rectangle, draw, minimum width=8mm] at (-1,0) {$A$};
    \node (B2) [rectangle, draw, minimum width=8mm] at (0,1) {$B$};
    \node (C2) [rectangle, draw, minimum width=8mm] at (1,0) {$C$};
    \draw[line width=0.5pt,latex-] (A2.east)--(C2.west)node[midway,below]{\tiny $2$}; 
    \draw[line width=0.5pt,-latex] (B2.south east)--(C2.north)node[midway, xshift=1mm, yshift=1.25mm] {\tiny $1$};
    \end{scope}
    \begin{scope}[xshift=8cm]
    \node (A3) [rectangle, draw, minimum width=8mm] at (-1,0) {$A$};
    \node (B3) [rectangle, draw, minimum width=8mm] at (0,1) {$B$};
    \node (C3) [rectangle, draw, minimum width=8mm] at (1,0) {$C$};
    \draw[line width=0.5pt,-latex] (A3.north)--(B3.south west)node[midway,xshift=-1.25mm, yshift=1mm]{\tiny $2$};
    \draw[line width=0.5pt,latex-] (A3.east)--(C3.west)node[midway,below]{\tiny $1$};
    \end{scope}
    \coordinate (D) at ($(C.east)!0.5!(A2.west)$);
    \coordinate (E) at ($(C2.east)!0.5!(A3.west)$);
    \node at ([yshift=6mm]D) {$+$};
    \node at ([yshift=6mm]E) {$+$};
    \node at ([shift={(6.85mm,6mm)}]C3.east) {$=\hspace{3mm}0$};
    \end{tikzpicture}
    \end{center}
\red{which, under the sign conventions can be written as}
\begin{center}
    \begin{tikzpicture}
    \node (A) [rectangle, draw, minimum width=8mm] at (-1,0) {$A$};
    \node (B) [rectangle, draw, minimum width=8mm] at (0,1) {$B$};
    \node (C) [rectangle, draw, minimum width=8mm] at (1,0) {$C$};
    \draw[line width=0.5pt,latex-] (A.north)--(B.south west)node[midway, xshift=-2mm, yshift=1mm]{\tiny $1$}; 
    \draw[line width=0.5pt,-latex] (B.south east)--(C.north)node[midway,xshift=2mm,yshift=1mm]{\tiny $2$};
    \begin{scope}[xshift=4.3cm]
    \node (A2) [rectangle, draw, minimum width=8mm] at (-1,0) {$A$};
    \node (B2) [rectangle, draw, minimum width=8mm] at (0,1) {$B$};
    \node (C2) [rectangle, draw, minimum width=8mm] at (1,0) {$C$};
    \draw[line width=0.5pt,latex-] (A2.east)--(C2.west)node[midway,below]{\tiny $1$}; 
    \draw[line width=0.5pt,-latex] (B2.south east)--(C2.north)node[midway, xshift=1mm, yshift=1.25mm] {\tiny $2$};
    \end{scope}
    \begin{scope}[xshift=8.3cm]
    \node (A3) [rectangle, draw, minimum width=8mm] at (-1,0) {$A$};
    \node (B3) [rectangle, draw, minimum width=8mm] at (0,1) {$B$};
    \node (C3) [rectangle, draw, minimum width=8mm] at (1,0) {$C$};
    \draw[line width=0.5pt,latex-] (A3.north)--(B3.south west)node[midway,xshift=-1.25mm, yshift=1mm]{\tiny $1$};
    \draw[line width=0.5pt,-latex] (A3.east)--(C3.west)node[midway,below]{\tiny $2$};
    \end{scope}
    \coordinate (D) at ($(C.east)!0.5!(A2.west)$);
    \coordinate (E) at ($(C2.east)!0.5!(A3.west)$);
    \node at ([yshift=6mm]D) {$=$};
    \node at ([yshift=6mm]E) {$+$};
    \node at (0,1.5) {};
    \node at (0,-0.35) {};
    \end{tikzpicture}\enskip\raisebox{0.66em}{.}
    \end{center}
  \red{The point is that the term on the left hand-side encodes the linear cell given as the union of the two linear cells encoded by the summands on the right hand-side.}
    \red{To illustrate the phenomenon, consider the sum}    
    \begin{center}
    \begin{tikzpicture}
    \begin{scope}[xshift=4.3cm]
    \node (A2) [rectangle, draw, minimum width=8mm] at (-1,0) {$1 \enskip 2$};
    \node (r1) [circle, draw, inner sep=0.5pt,minimum width=4mm] at ([xshift=2mm,yshift=5mm]A2.north west) {\footnotesize $3$};
    \draw[line width=0.5pt,-latex] ([xshift=2mm]A2.north west)--(r1.south);
    \node (B2) [rectangle, draw, minimum width=8mm] at (0,1) {$4 \enskip 5$};
    \node (r2) [circle, draw, inner sep=0.5pt,minimum width=4mm] at ([xshift=-2mm,yshift=5mm]B2.north east) {\footnotesize $6$};
    \draw[line width=0.5pt,-latex] ([xshift=-2mm]B2.north east)--(r2.south);
    \node (C2) [rectangle, draw, minimum width=8mm] at (1,0) {$7 \enskip 8$};
    \node (r3) [circle, draw, inner sep=0.5pt,minimum width=4mm] at ([xshift=-2mm,yshift=5mm]C2.north east) {\footnotesize $9$};
    \draw[line width=0.5pt,-latex] ([xshift=-2mm]C2.north east)--(r3.south);
    \draw[line width=0.5pt,latex-] (A2.east)--(C2.west)node[midway,below]{\tiny $1$}; 
    \draw[line width=0.5pt,-latex] (B2.south east)--(C2.north)node[midway, xshift=1mm, yshift=1.25mm] {\tiny $2$};
    \end{scope}
    \begin{scope}[xshift=8.3cm]
    \node (A3) [rectangle, draw, minimum width=8mm] at (-1,0) {$1 \enskip 2$};
    \node (r4) [circle, draw, inner sep=0.5pt,minimum width=4mm] at ([xshift=2mm,yshift=5mm]A3.north west) {\footnotesize $3$};
    \draw[line width=0.5pt,-latex] ([xshift=2mm]A3.north west)--(r4.south);
    \node (B3) [rectangle, draw, minimum width=8mm] at (0,1) {$4 \enskip 5$};
    \node (r5) [circle, draw, inner sep=0.5pt,minimum width=4mm] at ([xshift=-2mm,yshift=5mm]B3.north east) {\footnotesize $6$};
    \draw[line width=0.5pt,-latex] ([xshift=-2mm]B3.north east)--(r5.south);
    \node (C3) [rectangle, draw, minimum width=8mm] at (1,0) {$7 \enskip 8$};
    \node (r6) [circle, draw, inner sep=0.5pt,minimum width=4mm] at ([xshift=-2mm,yshift=5mm]C3.north east) {\footnotesize $9$};
    \draw[line width=0.5pt,-latex] ([xshift=-2mm]C3.north east)--(r6.south);
    \draw[line width=0.5pt,latex-] (A3.north)--(B3.south west)node[midway,xshift=-1.25mm, yshift=1mm]{\tiny $1$};
    \draw[line width=0.5pt,-latex] (A3.east)--(C3.west)node[midway,below]{\tiny $2$};
    \end{scope}
    \coordinate (D) at ($(C.east)!0.5!(A2.west)$);
    \coordinate (E) at ($(C2.east)!0.5!(A3.west)$);
    \node at ([yshift=6mm]E) {$+$};
    \end{tikzpicture}
    \end{center} 
\red{which encodes the linear submanifold of $M_d^{(3)}(9)$ corresponding to the interior of} the union of the co-oriented \red{linear cells} $c_1$ and $c_2$ with common defining inequalities
\begin{equation}\label{comunes}
x_1^{(1)}=x_2^{(1)} \leq x_3^{(1)}, \;\;\;x_4^{(1)}=x_5^{(1)} \leq x_6^{(1)}\;\;\;\mbox{and}\;\;\; x_7^{(1)}=x_8^{(1)} \leq x_9^{(1)},
\end{equation}
together with the requirement that all $x_i$-coordinates have the same projection under $p_1$. The additional defining inequalities in $c_1$ are
\begin{equation}\label{adi1}
\underbrace{x_4^{(1)} \leq x_7^{(1)}}_2 \mbox{ \; and \;} \underbrace{x_7^{(1)}\leq x_1^{(1)}}_{1},
\end{equation}
while the additional defining inequalities in $c_2$ are
\begin{equation}\label{adi2}
\underbrace{x_4^{(1)} \leq x_1^{(1)}}_{1} \mbox{ \; and \;} \underbrace{x_1^{(1)}\leq x_7^{(1)}}_{2}.
\end{equation}
The union of conditions \red{(\ref{adi1}) and (\ref{adi2})} can \red{then} be stated as $$\underbrace{x_4^{(1)}\leq x_1^{(1)}}_{1} \mbox{ \;and \; } \underbrace{x_4^{(1)}\leq x_7^{(1)}}_2$$ which, together with (\ref{comunes}), \red{is encoded by}
\begin{center}
    \begin{tikzpicture}
    \node (A) [rectangle, draw, minimum width=8mm] at (-1,0) {$1 \enskip 2$};
    \node (r1) [circle, draw, inner sep=0.5pt,minimum width=4mm] at ([xshift=2mm,yshift=5mm]A.north west) {\footnotesize $3$};
    \draw[line width=0.5pt,-latex] ([xshift=2mm]A.north west)--(r1.south);
    \node (B) [rectangle, draw, minimum width=8mm] at (0,1) {$4 \enskip 5$};
    \node (r2) [circle, draw, inner sep=0.5pt,minimum width=4mm] at ([xshift=-2mm,yshift=5mm]B.north east) {\footnotesize $6$};
    \draw[line width=0.5pt,-latex] ([xshift=-2mm]B.north east)--(r2.south);
    \node (C) [rectangle, draw, minimum width=8mm] at (1,0) {$7 \enskip 8$};
    \node (r3) [circle, draw, inner sep=0.5pt,minimum width=4mm] at ([xshift=-2mm,yshift=5mm]C.north east) {\footnotesize $9$};
    \draw[line width=0.5pt,-latex] ([xshift=-2mm]C.north east)--(r3.south);
    \draw[line width=0.5pt,latex-] (A.north)--(B.south west)node[midway, xshift=-2mm, yshift=1mm]{\tiny $1$}; 
    \draw[line width=0.5pt,-latex] (B.south east)--(C.north)node[midway,xshift=2mm,yshift=1mm]{\tiny $2$};
    \end{tikzpicture}\enskip\raisebox{0.66em}{.}
    \end{center}

\begin{remark}\label{squarewithoutrounds}
\red{As detailed in \cite[Remark 6.2 and proof of Theorem 6.1]{dobri2015}, the} generalized Jacobi relation in 
Theorem \ref{sum} arises as the \red{Borel-Moore} boundary of a \red{linear} cell described by a \red{forest-like graph} one of whose square vertices has $k-2$ (rather than $k-1$) elements. \red{Now, by definition, the generalized Jacobi relation makes sense only for $\red{\omega}>1$. Yet, as observed by Dobrinskaya and Turchin, it is possible to consider oriented $k$-forests $T$ with square vertices admitting no neighbouring round vertices. For them, the corresponding linear cells $c_T$ are then Borel-Moore boundaries, thus validating the generalized Jacobi relation for $\omega=1$.}
\end{remark}

\section{\red{The cohomology of \texorpdfstring{$\mdkn$}{non-k-equal manifolds}: Products}}\label{secciondescripciondeproductos}
\red{The} cup product $T\red{{}\smile{}}T'$ of \red{oriented} $k$-forests \red{$T$ and $T'$ is} assessed geometrically \red{in \cite{dobri2015}} as the Poincar\'e dual of the fundamental class of the intersection $c_T\cap c_{T'}$. \red{We start by setting notation and basic} ingredients, \red{which can be found in standard references such as} \cite[Section V.11]{MR1481706}, \cite[Sections II.9, IX.3, IX.4 and IX.5]{MR842190}, \cite[Section 19.1]{MR732620} and \cite[Theorem~10.4]{Spanier93}.

For a locally compact space $Z$, there is a (sheaf theoretic supported) cap product $\frown\colon H^{\text{BM}}_a(Z)\otimes H^b(Z)\to H^{\text{BM}}_{a-b}(Z)$. This has several properties, including:
\begin{enumerate}
    \item $f_*(a'\frown f^*\xi)=f_*a'\frown\xi$, for any proper map $f\colon Z'\to Z$ and \red{arbitrary} classes $a'\in H^{\text{BM}}_*(Z')$ and $\xi\in H^*(Z)$.
    \item $(a\frown\xi)\frown\eta=a\frown(\xi\smile\eta)$, for \red{arbitrary} classes $\xi,\eta\in H^*(Z)$ and $a\in H^{\text{BM}}_*(Z)$.
    \item For an oriented $n$-dimensional (Hausdorff paracompact) manifold $N$, cap product with the fundamental class $[N]\in H^{\text{BM}}_n(N)$ yields a duality isomorphism $$D\colon H^*(N)\to H_{n-*}^{\text{BM}}(N).$$
    \item For an oriented properly embedded submanifold $V\subset N$ of codimension $\red{c}$, the orientation class $\mathfrak{o}^N_V\in H^{\red{c}}(N)$ of $V$ in $N$, i.e., the restriction of the (normal) Thom class $\mathfrak{u}^N_V\in H^{\red{c}}(N,N-V)$ of $V$ in $N$, \red{yields} $D(\mathfrak{o}^N_V)=[V]_N\red{{}\in H^{\text{BM}}_{n-c}(N),}$ the image of $[V]$ \red{under} the inclusion $V\hookrightarrow N$. 
\end{enumerate}
This information suffices to prove, just as in \cite[Theorem~\red{VI.}11.9]{Bredon1993}, that \red{cup products on a given oriented $n$-manifold $N$ can be assessed, in geometrical terms, through the \emph{intersection pairing}} at the bottom of the commutative square
\begin{equation}\label{cupvsinter}
\begin{gathered}
\begin{tikzpicture}
\node (O) at (-2,2) {$\otimes$};
\node[anchor=west] (B) at (O.east) {$H^{n-q}\left(N\right)$};
\node[anchor=east] (A) at (O.west) {$H^{n-p}\left(N\right)$};
\node[anchor=west] (C) at ([xshift=4cm]O.east) {$H^{2n-p-q}\left(N\right)$};
\draw[line width=1pt,-latex] (B.east)--(C.west)node[midway,above]{$\smile$};
\node (P) at (-2,0) {$\otimes$};
\node[anchor=west] (D) at (P.east) {$H_{\red{p}}^{\text{BM}}(N)$};
\node[anchor=east] (E) at (P.west) {$H_{\red{q}}^{\text{BM}}(N)$};
\node[anchor=west] (F) at ([xshift=4cm]P.east) {$H_{p+q-n}^{\text{BM}}(N)\red{.}$};
\draw[line width=1pt,-latex] (D.east)--(F.west)node[midway, above]{\!\!$\bullet$};
\draw[line width=1pt,-latex] ([yshift=1cm,xshift=12mm]E.north)--([xshift=12mm]E.north)node[midway,left]{$\mbox{twist}\circ(D\otimes D)$};
\draw[line width=1pt,-latex] ([yshift=1cm,xshift=0mm]F.north)--([xshift=0mm]F.north)node[midway,right]{$D$};
\end{tikzpicture}
\end{gathered}
\end{equation}

\begin{theorem}\label{interseccion} Let \red{the manifold} $N$ be as in item (3) above. If \red{the oriented submanifolds} $X$ and $Y$ are properly embedded in $N$ \red{and have} transverse intersection, then
\[
[X]_N \bullet [Y]_N = [X\cap Y]_N.
\]
\end{theorem}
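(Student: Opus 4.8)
The plan is to deduce Theorem~\ref{interseccion} from the properties (1)--(4) of the cap product together with the commutative square~(\ref{cupvsinter}), exactly as in the compact case (\cite[Theorem~VI.11.9]{Bredon1993}), the only novelty being the bookkeeping needed for Borel-Moore homology on the non-compact manifold $N$. First I would observe that, since $X$ and $Y$ are properly embedded and co-oriented, they carry orientation classes $\mathfrak{o}^N_X\in H^{p'}(N)$ and $\mathfrak{o}^N_Y\in H^{q'}(N)$ (with $p'=n-p$, $q'=n-q$ the respective codimensions), and by property~(4) one has $D(\mathfrak{o}^N_X)=[X]_N$ and $D(\mathfrak{o}^N_Y)=[Y]_N$. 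Thus, by the very definition of the intersection pairing $\bullet$ as the bottom row of~(\ref{cupvsinter}), the claim $[X]_N\bullet[Y]_N=[X\cap Y]_N$ is equivalent to the cohomological identity $D\big(\mathfrak{o}^N_X\smile\mathfrak{o}^N_Y\big)=[X\cap Y]_N$, which in turn, using property~(3) that $D(-)=[N]\frown(-)$ and property~(2) that $([N]\frown\mathfrak{o}^N_X)\frown\mathfrak{o}^N_Y=[N]\frown(\mathfrak{o}^N_X\smile\mathfrak{o}^N_Y)$, reduces to showing
\begin{equation}\label{eq:reducedclaim}
\mathfrak{o}^N_X\smile\mathfrak{o}^N_Y=\mathfrak{o}^N_{X\cap Y},
\end{equation}
i.e.\ that the product of the two orientation classes is the orientation class of the transverse intersection.

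The key step, then, is~(\ref{eq:reducedclaim}), and this is a purely local statement about Thom classes. Since $X$ and $Y$ meet transversely, $X\cap Y$ is again a properly embedded submanifold of $N$, of codimension $p'+q'$, and a tubular neighbourhood of $X\cap Y$ splits (compatibly with the co-orientations) as the Whitney sum of the restrictions of the normal bundles of $X$ and of $Y$. The Thom class of a Whitney sum is the external-then-restricted product of the Thom classes of the summands; restricting from the pair $(N,N-(X\cap Y))$ down to $N$ and using naturality of the cup product for the pair inclusions $(N,N-X)\hookrightarrow(N,N-(X\cap Y))\hookleftarrow(N,N-Y)$ yields precisely $\mathfrak{u}^N_X\smile\mathfrak{u}^N_Y\mapsto\mathfrak{u}^N_{X\cap Y}$ in relative cohomology, and hence~(\ref{eq:reducedclaim}) after restricting to $N$. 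I would carry this out by citing the standard Thom-class multiplicativity (e.g.\ from \cite[Section 19.1]{MR732620} or the references already listed) rather than reproving it.

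Finally I would address the one genuinely non-compact subtlety: property~(4) as stated requires $X$, $Y$, and $X\cap Y$ to be \emph{properly} embedded so that their fundamental classes push forward to well-defined Borel-Moore classes $[X]_N$, $[Y]_N$, $[X\cap Y]_N$ in the appropriate $H^{\text{BM}}_*(N)$, and one must check that the duality isomorphism $D$ of property~(3) and the cap-product identities of properties~(1)--(2) are all compatible with the supports involved; this is exactly where the locally-compact/paracompact hypotheses on $N$ are used, and it is what makes the sheaf-theoretic cap product of the right type. The main obstacle I anticipate is therefore not any single hard computation but the careful verification that the diagram~(\ref{cupvsinter}) commutes with Borel-Moore coefficients and that all four maps in it respect properness, so that the compact-case argument of \cite{Bredon1993} transports verbatim; once that is in place, the proof is the short chain of equalities above combined with the local Thom-class computation~(\ref{eq:reducedclaim}).
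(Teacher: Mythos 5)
Your argument is correct and follows exactly the route the paper intends: the paper states Theorem~\ref{interseccion} without a separate proof, relying on properties (1)--(4), the commutative square~(\ref{cupvsinter}) established \`a la \cite[Theorem~VI.11.9]{Bredon1993}, and the multiplicativity of Thom/orientation classes under transverse intersection, which is precisely the chain of reductions you carry out. Your explicit attention to properness and Borel--Moore supports is a welcome amplification of what the paper leaves implicit, but it is not a different method.
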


\red{The use of Theorem~\ref{interseccion} in the case of non-$k$-equal manifolds leads to:}
\begin{definition}
\red{Two oriented $k$-forests} $T_1$, $T_2 \in H^\ast(\mdkn)$
\red{are said to be \emph{superposable} when} no square vertex of $T_1$ intersects a square vertex of\espacio$T_2$. \red{In such a case} we define the \red{\emph{\blue{bent} superposition}} $T_1\cup T_2$ as the oriented \red{$\nn$-bipartitioned} graph obtained by superposition of the \red{oriented $\nn$-bipartitioned graphs underlying} $T_1$ and\espacio$T_2$. \red{This means that  square vertices, their contents, and oriented edges between such vertices in $T_1\cup T_2$ are those holding either on $T_1$ or on $T_2$. Likewise, round vertices with content $i$, as well as oriented edges involving such vertices in $T_1\cup T_2$ are those holding either on $T_1$ or on $T_2$, \emph{as long as} $i$ has not been accounted for by some square vertex. Instead,} if some integer $i \in \nn$ lies in a square vertex $A$ in, say, $T_{1}$ as well as in a round vertex attached to some square vertex $B$ through an oriented edge in, respectively, $T_{2}$, then not only $i$ appears in $T_1\cup T_2$ inside the corresponding square vertex\espacio$A$, but a corresponding oriented \red{``\blue{bent}''} edge in $T_1\cup T_2$ between $A$ and $B$ \red{must be added. The left and right-hand sides in Figure \ref{bendededge} sketch the relevant situations for $T_2$ and $T_1\cup T_2$, respectively.}
\begin{figure}[ht] 
\centering
\begin{tikzpicture}[scale=0.85]
\node (A) [circle, draw, inner sep=0.5pt,minimum width=5mm] {$i$};
\node (C) [rectangle, draw, minimum width=1.6cm, minimum height=0.7cm] at (-0.3,0) {\hspace*{-8mm}$A$};
\node (B) [rectangle, draw, minimum width=1.4cm, minimum height=0.7cm] at (3,0) {$B$};
\draw[line width=0.5pt,-latex] (B.north) to[out=130,in=50] (C.north);
\node (D) [rectangle, draw, minimum width=1.4cm, minimum height=0.7cm] at (-5,0) {$B$};
\node (E) [circle, draw, inner sep=0.5pt,minimum width=5mm] at (-5,1.5) {$i$};
\draw[line width=0.5pt,-latex] (D.north) to (E.south);
\end{tikzpicture}
\caption{Bending an edge}
\label{bendededge}
\end{figure}
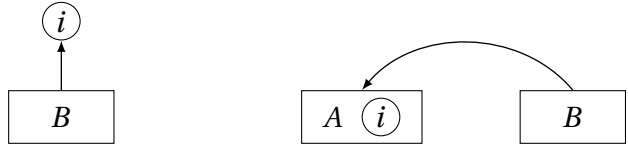
\red{Note that $T_1\cup T_2$ might end up having} multiple oriented edges between square vertices, as well as round vertices having two square vertices as immediate neighboring vertices.
\end{definition}

\begin{theorem}[{\cite[Theorem 7.1]{dobri2015}}]\label{prod} \red{The cup product $T_1\smile T_2$ of two oriented $k$-forests $T_1$, $T_2 \in H^\ast(\mdkn)$  vanishes in either of the} following three conditions:
\begin{itemize}
\item[\red{(A)}] \red{$T_1$ and $T_2$ are not superposable.} 
\item[\red{(B)}] \red{$T_1$ and $T_2$ are superposable and} $T_1 \cup T_2$ has unoriented cycles (for instance if two square vertices of $T_1 \cup T_2$ are joined by multiple edges). 
\item[\red{(C)}] \red{$T_1$ and $T_2$ are superposable and} $T_1 \cup T_2$ has a square vertex with no round vertex attached.
\end{itemize}
\red{In any other case the intersection $c_{T_1}\cap c_{T_2}$ is transverse and} $$T_1 \red{{}\smile{}} T_2\red{{}={}}T_1\cup T_2$$ with orientation set given by the concatenation of the orientation sets of the factors, and with the convention that, if $T_1\cup T_2$ \red{happens not to be} a $k$-forest (in the sense of Definition~\ref{defkforesto}), so that $T_1 \cup T_2$ has one or several round vertices of valency $2$, then we \red{transform $T_1 \cup T_2$ into a sum of oriented $k$-forests through repeated use of orientation relations and} the following form of the three-term relation:
\begin{equation}\label{productrelation} 
\begin{tikzpicture}[baseline=(current bounding box.center)]
\node (A) [rectangle, draw, minimum width=8mm] at (-1,0) {$A$};
\node (B) [circle, draw, minimum width=5mm] at (0,1) {};
\node (C) [rectangle, draw, minimum width=8mm] at (1,0) {$B$};
\draw[line width=0.5pt,latex-] (A.north)--(B.210)node[midway,xshift=-2mm,yshift=1mm]{\tiny $1$};
\draw[line width=0.5pt,-latex] (B.330)--(C.north)node[midway,xshift=2mm, yshift=1mm]{\tiny $2$};
\begin{scope}[xshift=3.75cm]
\node (A2) [rectangle, draw, minimum width=8mm] at (-1,0) {$A$};
\node (B2) [circle, draw, minimum width=5mm] at (0,1) {};
\node (C2) [rectangle, draw, minimum width=8mm] at (1,0) {$B$};
\draw[line width=0.5pt,-latex] (A2.east)--(C2.west)node[midway,below]{\tiny $2$};
\draw[line width=0.5pt,-latex] (B2.210)--(A2.north)node[midway,xshift=-2mm,yshift=1mm]{\tiny $1$};
\end{scope}
\begin{scope}[xshift=7.5cm]
\node (A3) [rectangle, draw, minimum width=8mm] at (-1,0) {$A$};
\node (B3) [circle, draw, minimum width=5mm] at (0,1) {};
\node (C3) [rectangle, draw, minimum width=8mm] at (1,0) {$B$};
\draw[line width=0.5pt,latex-] (C3.north)--(B3.330)node[midway,xshift=2mm,yshift=1mm]{\tiny $2$};
\draw[line width=0.5pt,latex-] (A3.east)--(C3.west)node[midway,below]{\tiny $1$};
\end{scope}
\coordinate (D) at ($(C.east)!0.5!(A2.west)$);
\coordinate (E) at ($(C2.east)!0.5!(A3.west)$);
\node at ([yshift=6mm]D) {$=$};
\node at ([yshift=6mm]E) {$+$};
\end{tikzpicture}\enskip\raisebox{-1.85em}{.}
\end{equation}
As above, pictures are local.
\end{theorem}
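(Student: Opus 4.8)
The plan is to compute $T_1\smile T_2$ geometrically, through the intersection pairing at the bottom of the commutative square \eqref{cupvsinter} applied to $N:=\mdkn$, an open --- hence orientable --- $dn$-dimensional submanifold of $(\mathbb{R}^d)^n$. By construction $T_i$ is the Poincar\'e dual of the Borel--Moore fundamental class $[\operatorname{Int}(c_{T_i})]_N$; equivalently $T_i=\mathfrak o^N_{V_i}$, where $V_i:=\operatorname{Int}(c_{T_i})$, so commutativity of \eqref{cupvsinter} reduces everything to evaluating $[V_1]_N\bullet[V_2]_N\in H^{\mathrm{BM}}_\ast(N)$. First I would record that each $V_i$ is a properly embedded oriented submanifold of $N$: the equalities defining $c_{T_i}$ cut out a linear subspace $C_{T_i}$, closed in $(\mathbb{R}^d)^n$, and at any boundary point of the closed cell $c_{T_i}$ --- where one of the inequalities \eqref{lado} becomes an equality --- a pair of adjacent vertices of $T_i$ has all $d$ coordinates equal; since every edge of a $k$-forest touches a square vertex, this forces at least $k$ equal coordinates, i.e.\ the point lies in $\adkn$. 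Hence $V_i=c_{T_i}\cap N$ is closed in $N$, and in fact an open convex subset of $C_{T_i}$, with trivial normal bundle co-oriented by $\pi_{T_i}\colon(\mathbb{R}^d)^n\to\mathbb{R}^{\deg(T_i)}$ as recalled in Section~\ref{secciondescripciondecohomologia}.

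The heart of the matter is translating the combinatorics of $T_1\cup T_2$ into the linear algebra of $C_{T_1}\cap C_{T_2}$. The equalities carving out $C_{T_i}$ are of two kinds --- ``$x_a=x_b$ for $a,b$ inside one square vertex'' and ``$p_1(x_a)=p_1(x_b)$ across one oriented edge'' --- and $\deg(T_i)$ is exactly the number of (independent) such equalities. Thus $C_{T_1}$ and $C_{T_2}$ meet transversally in $(\mathbb{R}^d)^n$ if and only if the union of the two systems remains independent, i.e.\ $\operatorname{codim}(C_{T_1}\cap C_{T_2})=\deg(T_1)+\deg(T_2)$. I would then check: if $T_1,T_2$ are \emph{not} superposable, a square vertex $A$ of $T_1$ meets a square vertex $B$ of $T_2$, and either $A\ne B$, whence $|A\cup B|\ge k$ and already $C_{T_1}\cap C_{T_2}\subseteq\adkn$ (so $V_1\cap V_2=\varnothing$ and the product vanishes), or $A=B$, whence the block ``$x_a=x_b$, $a,b\in A$'' is counted twice and the intersection drops codimension, hence is non-transverse; if $T_1,T_2$ are superposable but $T_1\cup T_2$ has an unoriented cycle (e.g.\ a double edge), the $p_1$-equalities around the cycle are dependent, so again the intersection is non-transverse; and if $T_1,T_2$ are superposable with $T_1\cup T_2$ free of unoriented cycles, the two systems are independent (transversality holds) and, reading off the bent edges, $c_{T_1}\cap c_{T_2}$ is precisely the cell $c_{T_1\cup T_2}$ --- using $\operatorname{relint}(c_{T_1})\cap\operatorname{relint}(c_{T_2})=\operatorname{relint}(c_{T_1\cup T_2})$ when non-empty --- possibly with some round vertices acquiring valency~$2$.

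In the transverse case I would invoke Theorem~\ref{interseccion}: since $V_1,V_2$ are open subsets of the linear subspaces $C_{T_1},C_{T_2}$, their tangent spaces everywhere are $C_{T_1},C_{T_2}$, transverse by the previous step, so $[V_1]_N\bullet[V_2]_N=[V_1\cap V_2]_N=[\operatorname{Int}(c_{T_1\cup T_2})]_N$, whose co-orientation is the direct sum of the two normal co-orientations --- that is, the orientation set of $T_1\cup T_2$ is the concatenation of those of $T_1$ and $T_2$. Dualizing back via \eqref{cupvsinter} yields $T_1\smile T_2=T_1\cup T_2$, and each round vertex of valency~$2$ is eliminated, at the level of cells, by the subdivision identity underlying \eqref{productrelation} --- the same cell-incidence phenomenon by which the three-term relation subdivides a cell in Section~\ref{secciondescripciondecohomologia}. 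It remains to kill the product in cases (C), (A) with $A=B$, and (B). Case (C) is immediate from Remark~\ref{squarewithoutrounds}: here $V_1\cap V_2=\operatorname{Int}(c_{T_1\cup T_2})$, but $c_{T_1\cup T_2}$ has a square vertex with no round neighbour and is therefore a Borel--Moore boundary, so $[V_1\cap V_2]_N=0$. For the non-transverse cases, write $\mathfrak u_i\in H^{\deg(T_i)}(N,N\setminus V_i)$ for the Thom class with image $\mathfrak o^N_{V_i}=T_i$; the relative cup product $\mathfrak u_1\smile\mathfrak u_2$ lies in $H^{\deg(T_1)+\deg(T_2)}\big(N,N\setminus(V_1\cap V_2)\big)$ and maps to $T_1\smile T_2$ in $H^\ast(N)$. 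But $V_1\cap V_2$ is a closed submanifold of $N$ with trivial normal bundle of codimension $d':=\operatorname{codim}(C_{T_1}\cap C_{T_2})<\deg(T_1)+\deg(T_2)$, so by the Thom isomorphism that relative group is $H^{\deg(T_1)+\deg(T_2)-d'}(V_1\cap V_2)$, which vanishes because $V_1\cap V_2$ is convex --- hence contractible (or empty) --- while $\deg(T_1)+\deg(T_2)-d'>0$. Therefore $\mathfrak u_1\smile\mathfrak u_2=0$ and $T_1\smile T_2=0$, covering all cases.

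The main obstacle is this last point: ruling out a nonzero product precisely when the set-theoretic intersection $c_{T_1}\cap c_{T_2}$ is genuinely non-empty but carries excess dimension. The Thom-isomorphism-plus-contractibility argument is the clean route; a naive ``perturb $V_2$'' argument is awkward, since a generic perturbation of $c_{T_2}$ moves its boundary off $\adkn$ and so need not remain properly embedded in $\mdkn$. Apart from that, the remaining work is routine: verifying the bent-edge dictionary (the equalities and inequalities forced by superposing a square vertex on a round one), tracking the Koszul signs through the splitting $\nu(V_1\cap V_2)\cong\nu V_1\oplus\nu V_2$ and through \eqref{productrelation}, and checking that $V_1\cap V_2$ is closed in $N$ with trivial normal bundle --- all of which follow from the affine/convex description above.
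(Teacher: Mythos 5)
Your proposal is correct, and its overall framework --- Borel--Moore Poincar\'e duality, the intersection pairing of (\ref{cupvsinter}), Theorem~\ref{interseccion} in the transverse case, and Remark~\ref{squarewithoutrounds} to kill case (C) --- is exactly the one the paper sets up. Bear in mind, though, that the paper does not prove this statement itself: it is quoted from Dobrinskaya--Turchin, and the only argument the paper records for the degenerate cases (in the proof of Lemma~\ref{posiblesproductos}) is a General Position Lemma, shifting one linear cell so that the two representatives become disjoint. This is precisely where you diverge. For the non-transverse cases (intersecting or identical square vertices, unoriented cycles) you instead note that $V_1\cap V_2$ is a closed, convex --- hence contractible or empty --- submanifold of excess codimension $d'<\deg T_1+\deg T_2$, so that $\mathfrak u_1\smile\mathfrak u_2$ lives in $H^{\deg T_1+\deg T_2}\big(N,N\setminus(V_1\cap V_2)\big)\cong H^{\deg T_1+\deg T_2-d'}(V_1\cap V_2)=0$ and therefore $T_1\smile T_2=0$. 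Your route buys rigor exactly at the point you flag: one need not verify that a shifted cell is still properly embedded with Borel--Moore boundary in $\adkn$. What it does not buy is the chain-level refinement that the paper extracts from the perturbation and actually uses downstream: Lemma~\ref{posiblesproductos}, Proposition~\ref{ternarioscero} and Remark~\ref{folklore} require representing cells whose set-theoretic intersection is empty (or a Borel--Moore boundary), not merely a vanishing cup product, and your Thom-isomorphism argument produces no bounding manifold $X$ with $K\pitchfork L=\partial X$. So if this theorem is to feed into Section~\ref{mps}, the general-position step (or a substitute) is still needed there. Two minor remarks: your reduction of transversality to the independence of the two linear systems, hence to acyclicity of the quotient graph of $T_1\cup T_2$, is the right bookkeeping and deserves to be written out in full; and the Koszul-sign check for the concatenated orientation set is asserted rather than carried out, which is harmless for the paper's mod-2 applications but would need attention over $\mathbb{Z}$.
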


Items \red{(B)} and \red{(C)} in Theorem~\ref{prod} might have to be used in the iterative process of applying relation~(\ref{productrelation}) \red{in order} to write \red{a non-$k$-forest} $T_1\cup T_2$ as a sum of \red{oriented} $k$-forests. For instance, if the pictures in~(\ref{productrelation}) are in fact global (omitting \red{possible} isolated round vertices), then \red{each of} the two summands on the right of~(\ref{productrelation}) would vanish in view of item~(C) in Theorem~\ref{prod}.

Relevant for us is the fact that $H^\ast(\mdkn)$ is multiplicatively generated by \red{\emph{elementary} $k$-forests, i.e., basic $k$-forests} having a single square vertex. Explicitly, a basic $k$-forest is, up to sign, the product of its connected components. In turn, each such connected component is, up to sign, a product of elementary $k$-forests. For example, the basic $3$-forest
$$
\begin{tikzpicture}[baseline=(current bounding box.center)]
\node (A) [rectangle, draw, minimum width=10mm] at (-1,0) {$1\enskip 2$};
\node (B) [rectangle, draw, minimum width=10mm] at (1,0) {$4\enskip 5$};
\node (C) [rectangle, draw, minimum width=10mm] at (3,0) {$7\enskip 8$};
\node (r1) [circle, draw, inner sep=0.5pt,minimum width=5mm] at (-1,1) {$3$};
\node (r2) [circle, draw, inner sep=0.5pt,minimum width=5mm] at (1,1) {$6$};
\node (r3) [circle, draw, inner sep=0.5pt,minimum width=5mm] at (3,1) {$9$};
\node at ([xshift=1mm,yshift=-1mm]A.south east) {\tiny $1$};
\node at ([xshift=1mm,yshift=-1mm]B.south east) {\tiny $2$};
\node at ([xshift=1mm,yshift=-1mm]C.south east) {\tiny $3$};
\draw[line width=0.5pt,-latex] (A.north)--(r1.south)node[midway,left]{\tiny $4$};
\draw[line width=0.5pt,-latex] (B.north)--(r2.south)node[midway,left]{\tiny $6$};
\draw[line width=0.5pt,-latex] (C.north)--(r3.south)node[midway,left]{\tiny $8$};
\draw[line width=0.5pt,-latex] (A.east)--(B.west)node[midway,above]{\tiny $5$};
\draw[line width=0.5pt,-latex] (B.east)--(C.west)node[midway,above]{\tiny $7$};
\end{tikzpicture}
$$
\blue{can be factorized as}
\[
\left(\rule[-6mm]{0pt}{12mm}\right.\  
\begin{tikzpicture}[baseline=(current bounding box.center)]
\node (A) [rectangle, draw, minimum width=10mm] at (-1,0) {$1\enskip 2$};
\node (r1) [circle, draw, inner sep=0.5pt,minimum width=5mm] at (-1.35,1) {$3$};
\node (r2) [circle, draw, inner sep=0.5pt,minimum width=5mm] at (-0.65,1) {$4$};
\node at ([xshift=1mm,yshift=-1mm]A.south east) {\tiny $1$};
\draw[line width=0.5pt,-latex] ([xshift=-1mm]A.north)--(r1.south)node[midway,left]{\tiny $2$};
\draw[line width=0.5pt,-latex] ([xshift=1mm]A.north)--(r2.south)node[midway,right]{\tiny $3$};
\end{tikzpicture}\left.\rule[-6mm]{0pt}{12mm}\right)
\left(\rule[-6mm]{0pt}{12mm}\right.\  
\begin{tikzpicture}[baseline=(current bounding box.center)]
\node (A) [rectangle, draw, minimum width=10mm] at (-1,0) {$4\enskip 5$};
\node (r1) [circle, draw, inner sep=0.5pt,minimum width=5mm] at (-1.35,1) {$6$};
\node (r2) [circle, draw, inner sep=0.5pt,minimum width=5mm] at (-0.65,1) {$\red{8}$};
\node at ([xshift=1mm,yshift=-1mm]A.south east) {\tiny $1$};
\draw[line width=0.5pt,-latex] ([xshift=-1mm]A.north)--(r1.south)node[midway,left]{\tiny $2$};
\draw[line width=0.5pt,-latex] ([xshift=1mm]A.north)--(r2.south)node[midway,right]{\tiny $3$};
\end{tikzpicture}\left.\rule[-6mm]{0pt}{12mm}\right)
\left(\rule[-6mm]{0pt}{12mm}\right.\  
\begin{tikzpicture}[baseline=(current bounding box.center)]
\node (A) [rectangle, draw, minimum width=10mm] at (-1,0) {$7\enskip 8$};
\node (r2) [circle, draw, inner sep=0.5pt,minimum width=5mm] at (-1,1) {$9$};
\node at ([xshift=1mm,yshift=-1mm]A.south east) {\tiny $1$};
\draw[line width=0.5pt,-latex] ([xshift=0mm]A.north)--(r2.south)node[midway,right]{\tiny $2$};
\end{tikzpicture}\left.\rule[-6mm]{0pt}{12mm}\right)\red{.}
\vspace{-1.2mm}
\]
\red{Note that factorizations are not unique.}

\red{The following additional piece of information regarding items (A)--(C) in Theorem\espacio\ref{prod} will be useful in the next section.}
\begin{lemma}\label{posiblesproductos}
\red{Consider cohomology classes $u,v\in H^*(\mdkn)$ represented, respectively, by elementary $k$-forests} 
\[
\mbox{$\red{T_u}=$
    \begin{tikzpicture} [baseline] 
    \node (r1) [circle, draw, minimum width=1mm,inner sep=1pt] at (-0.75,0.85) {$\red{b}_1$};
    \node (r2) [circle, draw, minimum width=1mm,inner sep=1pt] at (0.75,0.85) {$\red{b_r}$};
    \node (A1) [rectangle, draw, minimum width=1cm] at (0,0) {\red{$a_1 \cdots a_{k-1}$}};
    \draw[line width=0.5pt] (A1.north -| r1.south)--(r1.south);
    \node at (0,0.85) {$\cdots$};
    \draw[line width=0.5pt] (A2.north -| r2.south)--(r2.south);
    \end{tikzpicture}
    \quad \text{and} \quad 
    $\red{T_v}=$
    \begin{tikzpicture} [baseline] 
    \node (r1) [circle, draw, minimum width=1mm,inner sep=1pt] at (-0.75,0.85) {$\red{d}_1$};
    \node (r2) [circle, draw, minimum width=1mm,inner sep=1pt] at (0.75,0.85) {$\red{d_s}$};
    \node (A1) [rectangle, draw, minimum width=1cm] at (0,0) {$\red{c_1\cdots c_{k-1}}$};
    \node at (1.1,-.15) {\red{.}};
    \draw[line width=0.5pt] (A1.north -| r1.south)--(r1.south);
    \node at (0,0.85) {$\cdots$};
    \draw[line width=0.5pt] (A1.north -| r2.south)--(r2.south);
    \end{tikzpicture}}   
\]
\begin{itemize}
\item[\emph{(a)}]\red{If $T_u$ and $T_v$ are not superposable (so that $u\smile v=0$), then representing $k$-forests and linear cells can be chosen so that $c_{T_u} \cap c_{T_v}$ is empty in $M^{(k)}_d(n)$.}
\item[\emph{(b)}] \red{Assume $T_u$ and $T_v$ are superposable, still with $u\smile v=0$, and set
$\omega:=\card\,(\left\{a_1,\ldots,a_{k-1},b_1,\ldots, b_r \right\} \cap \left\{c_1,\ldots,c_{k-1},d_1,\ldots,d_s\right\})$ ---so that $\omega>0$. Then one of the following options must hold:}
\begin{itemize}
    \item[{\emph{(b.1)}}] $\,\,\,\red{\omega}>1$;
    \item[{\emph{(b.2)}}] $\,\,\,\red{\omega}=\red{r}=1$ \red{and} $\red{b}_1\in \red{\{c_1,\ldots,c_{k-1}\}}$;
    \item[{\emph{(b.3)}}] $\,\,\,\red{\omega}=\red{s}=1$ \red{and} $\red{d}_1\in \red{\{a_1,\ldots,a_{k-1}\}}$;
    \item[{\emph{(b.4)}}] \red{$\,\,\,\red{\omega}=r=s=1$ and $b_1=d_1$.}
\end{itemize}
\end{itemize}
\red{Furthermore, the conclusion in (a) also holds true in case (b.1), whereas the intersection $c_U\cap c_V$ is a Borel-Moore boundary in cases (b.2)--(b.4).}
\end{lemma}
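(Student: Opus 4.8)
The plan is to read everything off Theorem~\ref{prod} and the relations of Theorem~\ref{sum}, using throughout the set-theoretic identity $c_{T_u}\cap c_{T_v}=c_{T_u\cup T_v}$ between the intersection of two linear cells and the linear cell of their bent superposition. For the classification in (b), assume $T_u$ and $T_v$ are superposable with $u\smile v=0$; by Theorem~\ref{prod} one of its conditions (B) or (C) must hold for $T_u\cup T_v$. This graph carries the disjoint square vertices $S_u=\{a_1,\dots,a_{k-1}\}$ and $S_v=\{c_1,\dots,c_{k-1}\}$, and each of the $\omega$ shared integers is of one of three kinds: (I) it lies in $S_u$ and is a round vertex of $T_v$; (II) it is a round vertex of $T_u$ and lies in $S_v$; (III) it is a round vertex of both. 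A kind-(I) or kind-(II) integer produces an oriented edge between $S_u$ and $S_v$, while a kind-(III) integer produces a round vertex of valency $2$ attached to both. One then checks: an unoriented cycle (condition (B)) needs two such edges, or a valency-$2$ vertex plus a further edge, so $\omega\ge2$ --- unless $r=s=1$ with $b_1=d_1$, which is (b.4); and a square vertex of $T_u\cup T_v$ with no attached round vertex (condition (C)) forces every round vertex of $T_u$ to sit in $S_v$ (or symmetrically for $T_v$), contributing $\ge r$ (resp.\ $\ge s$) integers to $\omega$, so $\omega\ge2$ --- unless $r=1$ with $b_1\in S_v$, which is (b.2), or $s=1$ with $d_1\in S_u$, which is (b.3). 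Carrying out this routine but exhaustive case analysis yields exactly (b.1)--(b.4).

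For part (a) and the extension to (b.1): if the square vertices of $T_u$ and $T_v$ meet but are not equal (the typical situation under the hypothesis of (a)), then on $C_{T_u}\cap C_{T_v}$ all coordinates indexed by $S_u\cup S_v$ coincide, and $|S_u\cup S_v|\ge k$, so $c_{T_u}\cap c_{T_v}\subseteq A_I\subseteq\adkn$ for some $|I|=k$ and the intersection is already empty in $\mdkn$, with no choice needed. In the remaining cases --- $S_u=S_v$ inside (a), and all of (b.1), where $\omega\ge2$ --- we change representatives. The generalized Jacobi relation of Theorem~\ref{sum}(3) lets one, at the cost of replacing an elementary $k$-forest by a signed sum of elementary $k$-forests, trade an element of a square vertex for one of its attached round vertices; iterating this, we rewrite $u$ as $\sum_\alpha\pm T_\alpha$ and, if needed, $v$ as $\sum_\beta\pm T_\beta$ so that the square vertices satisfy $S_\alpha\ne S_\beta$ yet share an integer, whence $|S_\alpha\cup S_\beta|\ge 2(k-1)-1=2k-3\ge k$ (recall $k\ge3$) and each $c_{T_\alpha}\cap c_{T_\beta}$ lies in $\adkn$. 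With these representing $k$-forests and cells the intersection is empty in $\mdkn$, which is the conclusion of (a).

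Finally, in cases (b.2)--(b.4) we keep the given cells and use $c_{T_u}\cap c_{T_v}=c_{T_u\cup T_v}$, first resolving via relation~(\ref{productrelation}) the unique valency-$2$ round vertex present in case (b.4). In (b.2), $b_1\in S_v$ and $r=1$, so $S_u$ has its only round vertex absorbed and is left with none attached in $T_u\cup T_v$; case (b.3) is symmetric; in (b.4), resolving $b_1=d_1$ writes $c_{T_u\cup T_v}$ as a sum of two linear cells each of which, since $r=s=1$, has a square vertex with no attached round vertex. In all three cases Remark~\ref{squarewithoutrounds} shows the resulting cell(s) to be Borel-Moore boundaries, hence so is $c_{T_u}\cap c_{T_v}$. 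I expect the main obstacle to be the bookkeeping in the penultimate step: verifying that $\omega\ge2$ (and the equal-square-vertices case of (a)) always permits the iterated Jacobi rotations to force every resulting cell into $\adkn$ --- the delicate point being shared integers of kind (III), where for $k=3$ a single rotation can leave $S_\alpha=S_\beta$, so that one extra rotation is needed to separate the square vertices.
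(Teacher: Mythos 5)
Your classification in part (b), your handling of (b.2)--(b.4) via Remark~\ref{squarewithoutrounds}, and your direct observation for the sub-case of (a) where the two square vertices meet but differ (so that at least $k$ coordinates are forced to coincide and the intersection already lies in $\adkn$) are all correct and consistent with the paper, which simply defers these points to Theorems~\ref{sum} and~\ref{prod} and to Remark~\ref{squarewithoutrounds}. The same goes for the equal-square-vertex sub-case of (a), where one Jacobi rotation of a single factor does produce squares meeting $S_v$ in $k-2\geq 1$ elements while differing from it.

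The genuine gap is the one you flag yourself: the Jacobi-rotation strategy does not close case (b.1). Take $k=3$, $T_u$ with square $\{1,2\}$ and round vertices $3,4$, and $T_v$ with square $\{5,6\}$ and round vertices $3,4$ (so $\omega=2$ and $u\smile v=0$ by condition (B)). Any Jacobi rotation replaces $T_u$ by the \emph{full} sum over its round vertices, e.g.\ $[\,\{1,3\};2,4\,]+[\,\{1,4\};2,3\,]$, and likewise $T_v$ by $[\,\{5,3\};4,6\,]+[\,\{5,4\};3,6\,]$; among the resulting cross terms one always finds pairs with disjoint square vertices (here $\{1,3\}$ versus $\{5,4\}$) whose cells meet in a nonempty set forcing only pairs of coordinates to be equal, hence not contained in $\adkn$. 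Iterating only reproduces the problem, so "every resulting cell lands in $\adkn$" cannot be achieved. What is needed instead --- and what the paper does, following the proof of Theorem 7.1 in Dobrinskaya--Turchin --- is a general-position argument: when $\omega\geq 2$ the $p_1$-equality constraints defining $C_{T_u}$ and $C_{T_v}$ become redundant on the intersection, so $\operatorname{codim}(C_{T_u}\cap C_{T_v})<\deg T_u+\deg T_v$; a generic small translation of one linear cell (which leaves its Borel--Moore class unchanged) then renders the two cells disjoint. This perturbation also treats the non-superposable case of (a) uniformly, without any combinatorial rewriting.
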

\begin{proof}
\red{The fact that, under the hypotheses in \emph{(b)}, one of \emph{(b.1)--(b.4)} must hold follows from Theorems\espacio\ref{sum} and \ref{prod}, in view of the trivial-cup-product hypothesis. The assertion about $c_{T_u}\cap c_{T_v}$ in \emph{(b.2)--(b.4)} comes from Remark \ref{squarewithoutrounds}. Lastly, the empty-intersection condition in the case of non-superposable factors, as well as for $\omega>1$ with superposable factors, is contained in the proof of Theorem 7.1 in \cite{dobri2015}. As indicated by Dobrinskaya and Truchin, this might require a small adjustment of representing elementary $k$-forests, at the cohomology level, or corresponding linear cells, at the homology level. Explicitly, \blue{in both cases, the codimension of $c_{T_u}\cap c_{T_v}$ is not the sum of the factor's codimensions, showing that the intersection is not transverse. Hence, to assess the intersection product of $c_{T_u}$ and $c_{T_v}$} 
an application of the General Position Lemma is needed to shift slightly one of the linear cells \blue{to obtain a new linear cell, let's say $c_{T_v}'$, such that the intersection $c_{T_u}\cap c_{T_v}'$ now is vacuously transverse ---because the slight perturbation of the $w$ equal variables would make the linear cells $c_{T_u}$ and $c_{T_v}'$ disjoint. This slight perturbation has no effect on the (co)homology classes involved.}}  

\end{proof}

\section{Massey products and duality}\label{mps}
\red{For the rest of the paper we deal exclusively with non-3-equal manifolds $M^{(3)}_d(n)$. \blue{O}rientation issues will be neglected by working with mod-$2$ coefficients \blue{to simplify arguments}. In particular, edges of \blue{a} $3$-forests $T$ will no longer be oriented. Yet, we will keep the convention that linear cells $c_T$, and the corresponding submanifolds $\text{Int}(c_T)$ of $M^{(3)}_d(n)$ encoded by $T$, are taken (in terms of (\ref{lado})) as if edge orientations were the canonical ones in Figure~\ref{semilinear}, i.e., assuming that edges point right or upwards. See for instance (\ref{ori1}) and (\ref{ori2}) below.}

In his seminal work \cite{Massey1968}, Massey introduced a geometric method to \red{assess} his higher order cohomology \red{products.} \red{The idea is to use Poincar\'e duality in order} to replace cup products \red{at the cochain level by intersections of dual submanifolds at the (locally compact) chain level. Variants} of the technique have been used in knot theory to compute higher order linking numbers (\cite{MR790675,MR3672260,MR549154}). \red{Intersection theory has also} been used to evaluate Massey products \red{on} classical configuration spaces (\cite{MR2114713,MR2846157}). \red{With mod-2 coefficients,} the basic (folklore) observation \red{is summarized in Remark\espacio\ref{folklore} below, where we use the symbol $\pitchfork$ to indicate that an intersection of submanifolds is transverse.}

\begin{remark}\label{folklore}
\red{Consider properly embedded} submanifolds $K$, $L$ and $M$ of \red{some manifold} $N$, \red{and} let $\kappa$, $\lambda$ and $\mu$ \red{denote} the Poincar\'e duals of the fundamental classes
$[K]_N,[L]_N,[M]_N\in H_*^{\text{BM}}(N)$. Assume $K\pitchfork L=\partial X$ and $L\pitchfork M=\partial Y$ for submanifolds $X$ and $Y$ with $X\pitchfork M$ and $K\pitchfork Y$. Then the triple Massey product $\left\langle \kappa,\lambda,\mu\right\rangle$ contains the Poincar\'e dual of the fundamental class $\big[\red{(}X\cap M\red{)} \red{{}\cup{}} \red{(}K\cap Y\red{)}\big]_N\,.$
\end{remark}

\red{In the computations below, $M^{(3)}_d(n)$ will play the role of $N$, while} the submanifolds \red{$L$, $K$ and $M$ will be encoded by suitably chosen} $\red{3}$-forests. \red{On the other hand, the} submanifolds $X$ and\espacio$Y$ \red{will fail to correspond to honest 3-forests, but will be encoded through a similar terminology (cf. Remark \ref{squarewithoutrounds}). For example, the first forest-like graph in
\begin{equation}\label{subXY}
\begin{tikzpicture}[baseline=(current bounding box.center)]
\node (r1) [rectangle, draw, minimum width=6mm,inner sep=4pt] at (0,0) {$1$};
\node (r2) [circle, draw, minimum width=6mm,inner sep=1pt] at (0,0.8) {$2$};
\node (r3) [circle, draw, minimum width=6mm,inner sep=1pt] at (1.4,0.8) {$5$};
\node (A) [rectangle, draw, minimum width=1cm, inner sep=0pt] at (1.4,0) {$3,4$\rule[-1.5mm]{0pt}{5.5mm}};
\draw[line width=0.5pt] (r2)--(r1)--(A)--(r3);
\end{tikzpicture}
\qquad\qquad\qquad
\begin{tikzpicture}[baseline=(current bounding box.center)]
\node (r1) [rectangle, draw, minimum width=6mm,inner sep=4pt] at (0,0) {$3$};
\node (r2) [circle, draw, minimum width=6mm,inner sep=1pt] at (0,0.8) {$4$};
\node (r3) [circle, draw, minimum width=6mm,inner sep=1pt] at (1.4,0.8) {$7$};
\node (A) [rectangle, draw, minimum width=1cm, inner sep=0pt] at (1.4,0) {$5,6$\rule[-1.5mm]{0pt}{5.5mm}};
\draw[line width=0.5pt] (r2)--(r1)--(A)--(r3);
\end{tikzpicture}
\end{equation}
encodes the $(4d-3)$-codimensional linear submanifold of $M^{(3)}_d(n)$ determined by the conditions
\begin{align}
&p_1(x_i)=p_1(x_j), \text{ \,\, for } i,j\in\{1,2,3,4,5\}, \nonumber\\
&x_3=x_4, \label{ori1}\\
&x^{(1)}_1\leq x^{(1)}_2,\,\,x^{(1)}_1\leq x^{(1)}_3,\,\,x^{(1)} _3\leq x^{(1)}_5.\nonumber
\end{align}
Likewise, the forest-like graph on the right hand-side of (\ref{subXY}) encodes the $(4d-3)$-codimensional linear submanifold of $M^{(3)}_d(n)$ determined by the conditions}
\begin{align}
&\red{p_1(x_i)=p_1(x_j), \text{ \,\, for } i,j\in\{3,4,5,6,7\},} \nonumber\\
&\red{x_5=x_6,} \label{ori2}\\
&\red{x^{(1)}_3\leq x^{(1)}_4,\,\,x^{(1)}_3\leq x^{(1)}_5,\,\,x^{(1)} _5\leq x^{(1)}_7.}\nonumber
\end{align}

\begin{remark}\label{cuadrote}
\red{As cohomology} classes, the \red{3}-forests
$$
\elementary{u}{v}{w},\quad\elementary{u}{w}{v}\quad\text{\,and\,}\quad \elementary{v}{w}{u}
$$
\red{agree, in view of Theorem \ref{sum}(3). The common cohomology class will simply} be denoted by
$$
    \begin{tikzpicture}[baseline=7.2pt]
    \node (A) [minimum size=1.55cm, draw, minimum width=1.55cm, inner sep=0pt] at (1.4,0.23) {\red{$u,v,w$}\rule[-1.5mm]{0pt}{5.5mm}};
    \node at (2.45,-.51) {.};
    \end{tikzpicture}
$$
\end{remark}


\red{Part of the subtleties in the search of non-trivial Massey products in the first meaningful case outside the range in\espacio(\ref{loworder}) comes from:}
\begin{proposition}\label{ternarioscero} Every \red{well-defined} triple Massey product of elementary \red{basis elements in $H^*(M^{(3)}_d(n))$ vanishes (modulo indeterminacy).}
\end{proposition}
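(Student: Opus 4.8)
The plan is to evaluate $\langle u,v,w\rangle$ geometrically via Remark~\ref{folklore}, using Lemma~\ref{posiblesproductos} to control the relevant intersections. Let $K$, $L$, $M$ be the submanifolds of $N:=M^{(3)}_d(n)$ encoded by representing elementary $3$-forests $T_u$, $T_v$, $T_w$. Since the product is defined, $u\smile v=0=v\smile w$; and by Remark~\ref{folklore}, once chains $X$, $Y$ are produced with $\partial X=K\pitchfork L$, $\partial Y=L\pitchfork M$, $X\pitchfork M$ and $K\pitchfork Y$, the coset $\langle u,v,w\rangle$ contains the Poincaré dual of $[Z]_N$, where $Z:=(X\cap M)\cup(K\cap Y)$. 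As the indeterminacy of $\langle u,v,w\rangle$ is $u\smile H^*(N)+H^*(N)\smile w$, it suffices to exhibit $X$, $Y$ for which the dual of $[Z]_N$ lies in that subgroup (in particular for which $[Z]_N=0$), for then $0\in\langle u,v,w\rangle$.

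I would first dispose of the cases in which $K\cap L$ or $L\cap M$ can be emptied. By Lemma~\ref{posiblesproductos}(a) and its closing remark, if $T_u$ and $T_v$ are not superposable, or are superposable with $\omega>1$, then representing linear cells can be chosen with $K\cap L=\varnothing$; taking $X=\varnothing$ gives $X\cap M=\varnothing$, and symmetrically for $L\cap M$. This reduces us to the case where both $u\smile v=0$ and $v\smile w=0$ occur via one of the options \emph{(b.2)--(b.4)} of Lemma~\ref{posiblesproductos}. In each of these at least one of the two forests involved has a single round vertex, and the two forests share exactly one integer, occupying a prescribed position (inside a square vertex of one of them, or inside a round vertex of both). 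Cross-tabulating the finitely many options for $(T_u,T_v)$ against those for $(T_v,T_w)$ leaves a short list of patterns, in all of which the supports of $T_u$, $T_v$, $T_w$ overlap path-like, with $T_u$ and $T_w$ at the two ends.

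For each surviving pattern one writes out $K\cap L=c_{T_u}\cap c_{T_v}$ as an explicit linear cell. As in Remark~\ref{squarewithoutrounds}, its underlying graph carries a $(k-1)$-element square vertex with no adjacent round vertex (the would-be round neighbour having been absorbed into, or identified with, a square vertex of the other factor), so $c_{T_u}\cap c_{T_v}$ is a Borel--Moore boundary; a concrete $X$ is obtained by relaxing each equality $x_i=x_j$ among the contents $\{i,j\}$ of that orphaned square to the pair of conditions $p_1(x_i)=p_1(x_j)$, $x_i^{(1)}\le x_j^{(1)}$ --- after first splitting $c_{T_u}\cap c_{T_v}$ via the three-term relation~(\ref{productrelation}) in the patterns where the orphaned square is flanked by a valency-$2$ round vertex, so that $X$ becomes a sum of two such cells. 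One checks $\partial X=K\cap L$: forcing equality in any defining inequality of $X$ other than $x_i^{(1)}=x_j^{(1)}$ makes three coordinates of $(\mathbb{R}^d)^n$ coincide, hence leaves $M^{(3)}_d(n)$. Building $Y$ from $L\cap M$ the same way, one then studies $Z=(X\cap M)\cup(K\cap Y)$. Relaxing \emph{simultaneously} all the orphaned-square equalities that occur across the two pieces of $Z$ --- within the polyhedral region cut out by the surviving inequalities --- produces a chain $W$ whose Borel--Moore boundary, computed once more via the same triple-coincidence criterion, equals $Z$ together with a sum of honest $k$-forest cells; the point is that, thanks to the path-like overlap of supports, each such cell has its $T_u$- or $T_w$-end still detached as a round-carrying elementary factor, so that its class is dual to a cup product $u\smile\beta$ or $\alpha\smile w$. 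Hence $[Z]_N$ is dual to an element of the indeterminacy, and $0\in\langle u,v,w\rangle$.

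The main obstacle is exactly this last step, carried out across all the \emph{(b.2)--(b.4)}$\,\times\,$\emph{(b.2)--(b.4)} patterns: checking that $W$ is well defined and that $\partial W$ is $Z$ plus a sum of forest cells, and reading off the factorization of each such cell through $u$ or $w$. The bookkeeping is delicate: one must track which integer sits in which vertex of each superposed graph, remembering that a round vertex shared by a square vertex and another vertex acquires valency $2$ and must be re-resolved via~(\ref{productrelation}), possibly creating further orphaned squares that feed additional relaxations into $W$; and one uses $d\ge2$, $n>3$ to guarantee room for the General-Position perturbations of Lemma~\ref{posiblesproductos}.
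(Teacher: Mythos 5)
Your framework (Remark~\ref{folklore} plus Lemma~\ref{posiblesproductos}) and your disposal of the non-superposable and $\omega>1$ cases match the paper, but the way you handle the remaining cases \emph{(b.2)--(b.4)} leaves a genuine gap. Everything rests on the claim that the auxiliary chain $W$ has Borel--Moore boundary equal to $Z$ plus a sum of forest cells each of which is dual to a product $u\smile\beta$ or $\alpha\smile w$; you do not verify this for any of the cross-tabulated patterns, and you yourself flag it as ``the main obstacle'' with ``delicate bookkeeping.'' As written, this is a plan for a proof rather than a proof: the re-resolution of valency-$2$ round vertices via~(\ref{productrelation}) can create new orphaned squares, and without an explicit check that the extra boundary terms really factor through $u$ or $w$ (rather than, say, through the middle class $v$ or through no elementary factor at all), the conclusion $0\in\langle u,v,w\rangle$ does not follow.

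The idea you are missing is Remark~\ref{cuadrote}, which makes the hard cases evaporate. In each of \emph{(b.2)--(b.4)} at least one of the two factors is an elementary $3$-forest with a \emph{single} round vertex, and the two forests share exactly one integer. For $k=3$ such a class has three distinct forest representatives, obtained by rotating which of the three integers occupies the round vertex. Rotating the shared integer into the square vertex of the single-round-vertex factor produces a representative whose square vertex meets a square vertex of the other factor (in case \emph{(b.4)} do this to both factors), so the pair becomes non-superposable and, by Lemma~\ref{posiblesproductos}(a), the linear cells can be chosen disjoint. Doing this for both pairs $(\kappa,\lambda)$ and $(\lambda,\mu)$ yields $c_\kappa\cap c_\lambda=\varnothing=c_\lambda\cap c_\mu$, so one may take $X=Y=\varnothing$ in Remark~\ref{folklore} and conclude $0\in\langle\kappa,\lambda,\mu\rangle$ with no chain-level construction at all. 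This is exactly the paper's argument.
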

\begin{proof} \red{Consider a triple Massey product $\langle \kappa, \lambda, \mu\rangle$ of classes represented by elementary 3-forests} $\kappa$, $\lambda$, and $\mu$ \red{(so that $\kappa\smile\lambda=0=\lambda\smile\mu$).} It suffices to argue that, \red{after perhaps a slight adjustment of representing elementary $3$-forests or linear cells,}
\begin{equation}\label{biendefinido}
c_\kappa\cap c_\lambda\text{ and }c_\lambda\cap c_\mu\text{ are empty in $M^{(3)}_d(n)$,}
\end{equation}
\red{for then $0\in \langle \kappa, \lambda, \mu\rangle$, by Remark \ref{folklore}. In view of Lemma~\ref{posiblesproductos}, the only way in which (\ref{biendefinido}) could fail is if some of the representing elementary 3-forests have a single round vertex attached to its single square vertex. But in those cases, Remark\espacio\ref{cuadrote} can be used to assure the required conditions forcing (\ref{biendefinido}).}
\end{proof}

\red{As a warmup for the proof that $M^{(3)}_d(n)$ supports non-trivial triple Massey products for $n>6$, we illustrate the arguments in a simpler situation.}
\begin{example}\label{ejemplomassey0} \red{Since the product}
\begin{equation}\label{theproduct}
\left(\begin{tikzpicture}[baseline=3mm]
    \node (r1) [circle, draw, minimum width=6mm,inner sep=1pt] at (-0.7,0.8) {$2$};
    \node (r2) [circle, draw, minimum width=6mm,inner sep=1pt] at (0,0.8) {$4$};
    \node (r3) [circle, draw, minimum width=6mm,inner sep=1pt] at (0.7,0.8) {$5$};
    \node (A) [rectangle, draw, minimum width=1.5cm, inner sep=0pt] at (0,0) {$1,3$\rule[-1.5mm]{0pt}{5.5mm}};
    \draw[line width=0.5pt] (r1.south)--(r1|-A.north) (r2.south)--(r2|-A.north) 
    (r3.south)--(r3|-A.north);
    \end{tikzpicture}\right)\ \left(
\elementary{5}{6}{7}
\right)=\raisebox{1.3mm}{
\begin{tikzpicture}[baseline=(current bounding box.center)]
    \node (r4) [circle, draw, minimum width=6mm,inner sep=1pt] at (-0.35,0.8) {$2$};
    \node (r5) [circle, draw, minimum width=6mm,inner sep=1pt] at (0.35,0.8) {$4$};
    \node (r1) [rectangle, draw, minimum width=1cm, inner sep=0pt] at (0,0) {$1,3$\rule[-1.5mm]{0pt}{5.5mm}};
    \node (r3) [circle, draw, minimum width=6mm,inner sep=1pt] at (1.4,0.8) {$7$};
    \node (A) [rectangle, draw, minimum width=1cm, inner sep=0pt] at (1.4,0) {$5,6$\rule[-1.5mm]{0pt}{5.5mm}};
    \draw[line width=0.5pt] (r5.south)--(r5|-r1.north) (r4.south)-- (r4|-r1.north) (r1)--(A)--(r3);
    \end{tikzpicture}}
\end{equation}
    lies in the indeterminacy of
\begin{equation}\label{tmpexe}
\left\langle
\elementary{1}{2}{3},\elementary{3}{4}{5},\elementary{5}{6}{7}
\right\rangle,
\end{equation}
\red{Proposition~\ref{ternarioscero} implies that (\ref{theproduct}) lies in (\ref{tmpexe}). Our goal is to recover} the latter assertion \red{through a direct geometric argument. Consider the submanifolds $K$, $L$ and $M$ of $N:=M^{(3)}_d(n)$ encoded, respectively, by}
\[
\elementary{1}{2}{3},\,\enskip 
\elementary{3}{4}{5}\,\enskip\mbox{and}\enskip
\elementary{5}{6}{7}\red{.}
\]
\red{The submanifolds $X$ and $Y$ encoded, respectively, by the forest-like diagrams in (\ref{subXY}) satisfy $K\pitchfork L=\partial X$ and $L\pitchfork M=\partial Y$ in $N$. Indeed, no boundary condition (i.e., an equality that replaces an inequality) can be taken with respect to the $\leq$-inequalities coming from the edges of the square vertices in (\ref{subXY}) containing $\{3,4\}$ or $\{5,6\}$, for otherwise we \blue{fall outside} $N$. Furthermore $X\pitchfork M$ and $K\pitchfork Y$ are respectively encoded by}
$$
\begin{tikzpicture}[baseline=(current bounding box.center)]
    \node (r4) [rectangle, draw, minimum width=6mm,inner sep=4pt] at (-1.4,0) {$1$};
    \node (r5) [circle, draw, minimum width=6mm,inner sep=1pt] at (-1.4,0.8) {$2$};
    \node (r1) [rectangle, draw, minimum width=1cm, inner sep=0pt] at (0,0) {$3,4$\rule[-1.5mm]{0pt}{5.5mm}};
    \node (r3) [circle, draw, minimum width=6mm,inner sep=1pt] at (1.4,0.8) {$7$};
    \node (A) [rectangle, draw, minimum width=1cm, inner sep=0pt] at (1.4,0) {$5,6$\rule[-1.5mm]{0pt}{5.5mm}};
    \draw[line width=0.5pt] (r5)--(r4)--(r1)--(A)--(r3);
    \end{tikzpicture}\qquad\raisebox{-5mm}{and}\qquad
\begin{tikzpicture}[baseline=(current bounding box.center)]
    \node (B) [rectangle, draw, minimum width=1cm, inner sep=0pt] at (-1.4,0) {$1,2$\rule[-1.5mm]{0pt}{5.5mm}};
    \node (r1) [rectangle, draw, minimum width=6mm,inner sep=4pt] at (0,0) {$3$};
    \node (r2) [circle, draw, minimum width=6mm,inner sep=1pt] at (0,0.8) {$4$};
    \node (r3) [circle, draw, minimum width=6mm,inner sep=1pt] at (1.4,0.8) {$7$};
    \node (A) [rectangle, draw, minimum width=1cm, inner sep=0pt] at (1.4,0) {$5,6$\rule[-1.5mm]{0pt}{5.5mm}};
    \draw[line width=0.5pt] (r2)--(r1)--(A)--(r3) (B)--(r1);
\end{tikzpicture}\,\,\,\raisebox{-14pt}{\red{.}}
$$
\red{The goal is then achieved in view of Remark \ref{folklore}, as the fundamental class $\varphi:=[(X\cap M) \cup (K\cap Y)]_N\in H^{\text{BM}}_*(M^{(3)}_d(n))$ is Poincar\'e dual of (\ref{theproduct}). Indeed, the union $(X\cap M) \cup (K\cap Y)$ sits inside the boundary of the submanifold $Z$ of $N$ encoded by}
\[
\begin{tikzpicture}[baseline=(current bounding box.center)]
\node (r4) [rectangle, draw, minimum width=6mm,inner sep=4pt] at (-1.4,0) {$1$};
\node (r5) [circle, draw, minimum width=6mm,inner sep=1pt] at (-1.4,0.8) {$2$};
\node (r1) [rectangle, draw, minimum width=6mm,inner sep=4pt] at (0,0) {$3$};
\node (r2) [circle, draw, minimum width=6mm,inner sep=1pt] at (0,0.8) {$4$};
\node (r3) [circle, draw, minimum width=6mm,inner sep=1pt] at (1.4,0.8) {$7$};
\node (A) [rectangle, draw, minimum width=1cm, inner sep=0pt] at (1.4,0) {$5,6$\rule[-1.5mm]{0pt}{5.5mm}};
\draw[line width=0.5pt] (r2)--(r1)--(A)--(r3) (r5)--(r4)--(r1);
\end{tikzpicture}\,\,\,\red{\raisebox{-6mm}{$,$}}
\]
\red{while the} rest of the boundary \red{of $Z$ is a manifold representing $\varphi$ and clearly encoded by (\ref{theproduct}).}
\end{example}

The verification of the following \red{auxiliary} fact is an elementary exercise using the cup-product description\red{s} in Section~\ref{secciondescripciondeproductos}.

\begin{lemma}\label{inprepa}
The product of two \red{elementary} \blue{terms}
$$ 
\begin{tikzpicture}[baseline=3mm]
    \node (r1) [circle, draw, minimum width=6mm,inner sep=1pt] at (-0.7,0.8) {$c$};
    \node (r2) [circle, draw, minimum width=6mm,inner sep=1pt] at (0,0.8) {$d$};
    \node (r3) [circle, draw, minimum width=6mm,inner sep=1pt] at (0.7,0.8) {$e$};
    \node (A) [rectangle, draw, minimum width=1.5cm, inner sep=0pt] at (0,0) {$a,b$\rule[-1.5mm]{0pt}{5.5mm}};
    \draw[line width=0.5pt] (r1.south)--(r1|-A.north) (r2.south)--(r2|-A.north) (r3.south)--(r3|-A.north);
\end{tikzpicture} \qquad \text{and}\qquad
\elementary{x}{y}{z}
$$
in $H^*(M^{(3)}_d(n))$ is either zero or a sum of \red{basic} elements of one of the two forms
$$
\begin{tikzpicture}[baseline=2.8mm]
    \node (r4) [circle, draw, minimum width=6mm,inner sep=1pt] at (-0.35,0.8) {};
    \node (r5) [circle, draw, minimum width=6mm,inner sep=1pt] at (0.35,0.8) {};
    \node (r1) [rectangle, draw, minimum width=1cm, inner sep=0pt] at (0,0) {\rule[-1.5mm]{0pt}{5.5mm}};
    \node (r3) [circle, draw, minimum width=6mm,inner sep=1pt] at (1.4,0.8) {$z$};
    \node (A) [rectangle, draw, minimum width=1cm, inner sep=0pt] at (1.4,0) {$x,y$\rule[-1.5mm]{0pt}{5.5mm}};
    \draw[line width=0.5pt] (r5.south)--(r5|-r1.north) (r4.south)-- (r4|-r1.north) (r1)--(A)--(r3);
    \end{tikzpicture}
    \qquad \text{or}\qquad 
    \begin{tikzpicture}[baseline=3mm]
    \node (r1) [circle, draw, minimum width=6mm,inner sep=1pt] at (-0.7,0.8) {};
    \node (r2) [circle, draw, minimum width=6mm,inner sep=1pt] at (0,0.8) {};
    \node (r3) [circle, draw, minimum width=6mm,inner sep=1pt] at (0.7,0.8) {};
    \node (A) [rectangle, draw, minimum width=1.5cm, inner sep=0pt] at (0,0) {\rule[-1.5mm]{0pt}{5.5mm}};
    \draw[line width=0.5pt] (r1.south)--(r1|-A.north) (r2.south)--(r2|-A.north) 
    (r3.south)--(r3|-A.north);
\end{tikzpicture}\cdot \elementary{x}{y}{z}
$$
and, in either case, the set of numbers inside the vertices of each of these summands is precisely $\{a,b,c,d,e,x,y,z\}$.
\end{lemma}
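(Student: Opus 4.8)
The plan is to read off the structure of $T_u \smile T_v$ straight from Theorem~\ref{prod}, which already decides when the product vanishes and, otherwise, identifies it with the straightened bent superposition $T_u \cup T_v$. Since $T_u$ has the single square vertex $\{a,b\}$ carrying the round vertices $c,d,e$, and $T_v$ has the single square vertex $\{x,y\}$ carrying the round vertex $z$, superposability of $T_u$ and $T_v$ amounts to $\{a,b\}\cap\{x,y\}=\emptyset$, which I assume throughout, the product being $0$ by item~(A) otherwise.

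The next step is to classify the possible coincidences among the eight labels $a,b,c,d,e,x,y,z$. Because $\{a,b\}$ and $\{x,y\}$ are disjoint, a repeated label is of exactly one of three kinds: a label among $c,d,e$ that also lies in $\{x,y\}$ (it is absorbed into the square $\{x,y\}$ and produces a bent edge $\{a,b\}\to\{x,y\}$); the label $z$ lying in $\{a,b\}$ (absorbed into $\{a,b\}$, producing a bent edge $\{x,y\}\to\{a,b\}$ and leaving $\{x,y\}$ with no round neighbour); or $z$ equal to one of $c,d,e$ (a round vertex of valency $2$ joining the two squares). I would then observe that any two coincidences, of whatever kinds, create two independent edge-routes between $\{a,b\}$ and $\{x,y\}$, hence an unoriented cycle in $T_u\cup T_v$, killing the product by item~(B); and that the lone coincidence $z\in\{a,b\}$ leaves the square $\{x,y\}$ without an attached round vertex, killing the product by item~(C).

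This leaves exactly three surviving configurations, which I would record one by one. If no label repeats, $T_u\cup T_v$ is the disjoint union of $T_u$ and $T_v$, which after the (sign-free, mod $2$) reorderings of the orientation set is the basic element of the second form. If exactly one label among $c,d,e$ meets $\{x,y\}$ and nothing else repeats, then $T_u\cup T_v$ is already a genuine linear $3$-forest: the square $\{a,b\}$ carries its two surviving round vertices, is joined by a (bent) edge to $\{x,y\}$, and $\{x,y\}$ still carries $z$ --- the basic element of the first form. If instead $z$ coincides with one of $c,d,e$, then $T_u\cup T_v$ has one round vertex of valency $2$; applying relation~(\ref{productrelation}) splits it into two summands, one having the square $\{x,y\}$ roundless (and so vanishing by item~(C)) and the other being again the linear $3$-forest of the first form. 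In every surviving case no label is created or destroyed, so the vertices of each summand contain precisely $\{a,b,c,d,e,x,y,z\}$.

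The one slightly delicate point --- bookkeeping rather than depth --- is the last configuration: one must confirm that, after relation~(\ref{productrelation}) is applied, both summands are still governed by items~(B)--(C) of Theorem~\ref{prod}, so that the bad summand genuinely drops out. The clean way to see this is the remark already used above, namely that the offending summand is exactly the one in which $z$ has been detached from the square $\{x,y\}$, leaving that square with no round neighbour.
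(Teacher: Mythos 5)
Your case analysis is correct and is exactly the ``elementary exercise using the cup-product descriptions of Section~\ref{secciondescripciondeproductos}'' that the paper invokes without writing out: the paper gives no proof of Lemma~\ref{inprepa} beyond that one-line remark, and your classification of coincidences, the use of items (A)--(C) of Theorem~\ref{prod}, and the treatment of the valency-$2$ round vertex via relation~(\ref{productrelation}) (with the roundless-square summand dying by item~(C), as the paper itself notes in the paragraph following Theorem~\ref{prod}) fill in that exercise faithfully. The only cosmetic gloss is that in the one-coincidence case the resulting linear $3$-forest need not literally be \emph{basic} until orientation reorderings and generalized Jacobi relations are applied, but these preserve both the displayed shape and the label set, so the stated conclusion is unaffected.
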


\begin{theorem}\label{ptmnt} For $n>6$, the \red{triple} Massey product in $M_d^{(3)}(n)$
\begin{equation}\label{notrivial}
 \left\langle
\elementary{1}{2}{3},\elementary{3}{4}{5},\elementary{5}{6}{7}{+}\elementary{4}{6}{7}{+}\elementary{4}{5}{7}{+}\elementary{4}{5}{6}
\right\rangle
\end{equation}
is \red{well-defined and non-trivial.}
\end{theorem}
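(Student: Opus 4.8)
The plan has three parts: checking that the product is defined, producing an explicit representative by the Poincaré–duality recipe of Remark~\ref{folklore}, and showing that this representative escapes the indeterminacy. Throughout write $\kappa=\elementary{1}{2}{3}$, $\lambda=\elementary{3}{4}{5}$, $\mu=\elementary{5}{6}{7}+\elementary{4}{6}{7}+\elementary{4}{5}{7}+\elementary{4}{5}{6}$ and $N=M^{(3)}_d(n)$; note all forests in sight use only the integers $1,\dots,7$, which is why $n>6$ is needed.

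\emph{Well-definedness.} By Theorem~\ref{prod}, $\kappa\smile\lambda=0$ because $\kappa$ and $\lambda$ are superposable but $\kappa\cup\lambda$ carries the square vertex $\{1,2\}$ with no attached round vertex (condition~(C)); and $\lambda\smile\mu=\sum\lambda\smile(\text{summand})$ vanishes term by term, since $\lambda\smile\elementary{5}{6}{7}=0$ again by condition~(C) (the square $\{3,4\}$ of $\lambda$ ends up roundless), while $\lambda\smile\elementary{4}{6}{7}$, $\lambda\smile\elementary{4}{5}{7}$ and $\lambda\smile\elementary{4}{5}{6}$ vanish by condition~(A) because the square $\{3,4\}$ meets each of the squares $\{4,6\},\{4,5\}$. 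Hence $\langle\kappa,\lambda,\mu\rangle$ is a well-defined coset of $\mathrm{Ind}:=\kappa\smile H^{4d-3}(N)+H^{4d-3}(N)\smile\mu$ inside $H^{6d-4}(N)$.

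\emph{A representative.} I would run Example~\ref{ejemplomassey0} verbatim. Let $K,L$ and $M_1,\dots,M_4$ be the submanifolds of $N$ encoded by $\kappa$, $\lambda$ and by the four summands of $\mu$, and set $M=M_1\cup\cdots\cup M_4$, a Borel–Moore representative of $\mu$. Since $K$ and $L$ are exactly the submanifolds of Example~\ref{ejemplomassey0}, the same $X$ (first diagram of (\ref{subXY})) has $K\pitchfork L=\partial X$, and the same $Y$ (second diagram of (\ref{subXY})) will have $L\pitchfork M=\partial Y$ once one observes the key point: the equalities $x_3=x_4=x_6$ and $x_3=x_4=x_5$ take one out of $N$, so that $L\cap M_i=\varnothing=X\cap M_i$ in $N$ for $i=2,3,4$, whence $L\pitchfork M=L\pitchfork M_1$ and $X\cap M=X\cap M_1$. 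Using the cobounding cell $Z$ of Example~\ref{ejemplomassey0}, whose Borel–Moore boundary inside $N$ consists of $K\cap Y$, $X\cap M_1$ and the proper $3$-forest $E$ made of the square vertex $\{1,3\}$ with attached round vertices $2,4$ joined by an edge to the square vertex $\{5,6\}$ with attached round vertex $7$, up to cells excluded by triple diagonals, one gets mod $2$
\[
\bigl[(X\cap M)\cup(K\cap Y)\bigr]_N=[E]_N ,
\]
so, by Remark~\ref{folklore}, the basic $3$-forest $\varphi:=E$ lies in $\langle\kappa,\lambda,\mu\rangle$; equivalently $\varphi$ is the cup product of $\elementary{5}{6}{7}$ with the degree-$(4d-3)$ basic forest having square vertex $\{1,3\}$ and attached round vertices $2,4,5$. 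This is literally the representative of Example~\ref{ejemplomassey0}; what changes for $n>6$ is that $\mu$ is now a sum of four terms, so $\mathrm{Ind}$ is strictly smaller.

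\emph{Non-triviality.} It remains to prove $\varphi\notin\mathrm{Ind}$. A degree count identifies $H^{4d-3}(N)$ with the span of basic $3$-forests having a single square vertex carrying three round vertices. For the first summand of $\mathrm{Ind}$, use that in any nonzero product $\kappa\smile\eta$ with $\eta$ such a forest the square $\{1,2\}$ of $\kappa$ survives intact: it never acquires a second round vertex among the seven integers in play, so no generalized Jacobi rewriting reaches it, and it cannot lose its round vertex $3$ without the product vanishing by condition~(C). Hence $\kappa\smile H^{4d-3}(N)$ lies in the span of the basic forests in which $1$ and $2$ share a square vertex, a span to which $\varphi$ does not belong because in $\varphi$ the integer $2$ sits in a round vertex; so it suffices to show that the image of $\varphi$ in the quotient of $H^{6d-4}(N)$ by that span is not hit by $H^{4d-3}(N)\smile\mu$. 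Expanding each product $\eta\smile\mu=\sum_j\eta\smile\mu_j$ of a single basic generator $\eta$ by means of Lemma~\ref{inprepa} and the rewriting rules of Section~\ref{secciondescripciondeproductos}, extract the finitely many basic summands in which $1,2$ do not share a square vertex and organize the outcome as a $\mathbb{Z}_2$-linear map; a direct (and entirely bounded) computation then shows $\varphi$ is not in its image. The point is that each of $\mu_2,\mu_3,\mu_4$ collides with $\mu_1$ on at least two integers---so the bent superpositions produce unoriented cycles and vanish by condition~(B) or force triple diagonals---which ties the four products $\eta\smile\mu_j$ together so tightly that producing $\varphi$ from one of them inevitably drags in "sibling" basic forests through the others which cannot all be cancelled without re-introducing $\varphi$. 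Hence $\varphi\notin H^{4d-3}(N)\smile\mu$, so $\varphi\notin\mathrm{Ind}$ and $\langle\kappa,\lambda,\mu\rangle\neq\mathrm{Ind}$, which is the assertion.

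\emph{Where the difficulty lies.} The first two parts are routine: they reuse Section~\ref{secciondescripciondeproductos} and the geometry already carried out in Example~\ref{ejemplomassey0}, together with the trivial observation that the extra summands of $\mu$ meet the relevant submanifolds only in triple-diagonal loci excluded from $N$. The real work is the last step---controlling $H^{4d-3}(N)\smile\mu$ precisely enough to see that the four summands of $\mu$ interfere, so that no linear combination of products $\eta\smile\mu$ equals $\varphi$ modulo $\kappa\smile H^{4d-3}(N)$ (equivalently: the element of the homology basis of \cite{dobri2015} dual to $\varphi$ annihilates $\mathrm{Ind}$ but pairs nontrivially with $\varphi$). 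This is exactly where the particular four-term shape of $\mu$ and the exclusion of triple diagonals from $M^{(3)}_d(n)$ are essential, and where careful bookkeeping with the basic-forest calculus is unavoidable.
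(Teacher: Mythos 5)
Your first two steps coincide with the paper's. Well-definedness is checked the same way; the representative $\varphi$ (square $\{1,3\}$ with rounds $2,4$, linked to square $\{5,6\}$ with round $7$) is obtained exactly as in the paper by extending Example~\ref{ejemplomassey0}, using that the three extra summands of $\mu$ meet $L$ and $X$ only in triple-diagonal loci excluded from $N$. Your disposal of the summand $\kappa\smile H^{4d-3}(N)$ of the indeterminacy (every basic forest in its expansion keeps $1$ and $2$ in a common square, while $\varphi$ does not) is also essentially what the paper extracts from Lemma~\ref{inprepa}.

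The gap is the remaining half of the indeterminacy, which is the mathematical heart of the theorem. Having reduced to showing that $\varphi$ is not of the form $\beta\smile\mathfrak S$ with $\mathfrak S=T_4+T_5+T_6+T_7$ (modulo the first summand), you assert that ``a direct (and entirely bounded) computation'' finishes the job, supported only by a heuristic about the four summands of $\mu$ interfering; that computation is precisely what has to be exhibited, and the heuristic does not pin it down (the products $\eta\smile\mu_j$ for distinct $j$ are computed separately, so no unoriented cycles between $\mu_1$ and the other $\mu_j$ ever arise --- the interference is of a different nature). The paper closes this with a parity argument: any elementary basis element $\tau$ with square $\{a,b\}$ and rounds $c,d,e$ satisfying $\tau\smile\mathfrak S\neq0$ is forced to have $\{a,b,c,d\}=\{1,2,3,\ell\}$ and $e\in\{T_\ell\}$ for some $\ell\in\{4,5,6,7\}$, and in each of the two resulting cases ($\ell\in\{a,b\}$ or $\ell\in\{c,d\}$) the product $\tau\smile\mathfrak S$ expands as a sum of \emph{exactly two} basic forests; hence $\beta\smile\mathfrak S$ always expands into an even number of basic forests, whereas $\varphi$ is a single one. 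Note also that your plan of working modulo the span $W$ of basics in which $1$ and $2$ share a square would actually obstruct this count, since in the case $\ell\in\{a,b\}$ one of the two basics produced can lie in $W$ while the other does not; this is why the paper first uses Lemma~\ref{inprepa} to show that the term $\alpha\smile\kappa$ must vanish outright (its basic expansion is disjoint from that of $\beta\smile\mathfrak S$ and cannot contain $\varphi$), rather than passing to a quotient. You need to supply either the parity count or the explicit linear-algebra verification you allude to; as written the proof is incomplete at its decisive step.
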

\begin{proof} \red{Well-definedness is obvious. Consider the submanifolds $K$, $L$, $M$, $X$ and $Y$ in Example \ref{ejemplomassey0}, together with the submanifolds $M_5$, $M_6$ and $M_7$ encoded, respectively, by the second, third and fourth summands of}
\begin{align*}
\elementary{5}{6}{7}+\elementary{4}{6}{7}+\elementary{4}{5}{7}+\elementary{4}{5}{6}\raisebox{-4mm}{\red{.}}
\end{align*}
\red{Note that $L\cap M_i=\varnothing$ for $5\leq i\leq7$, so that $L\pitchfork\widetilde{M}=L\pitchfork M=\partial Y$, where $\widetilde{M}=M\cup M_5\cup M_6\cup M_7$. Since $X\cap M_i=\varnothing$ for $5\leq i\leq7$, the conclusion in Example \ref{ejemplomassey0} extends to yield} that 
$$ 
\begin{tikzpicture}[baseline=(current bounding box.center)]
    \node (r4) [circle, draw, minimum width=6mm,inner sep=1pt] at (-0.35,0.8) {$2$};
    \node (r5) [circle, draw, minimum width=6mm,inner sep=1pt] at (0.35,0.8) {$4$};
    \node (r1) [rectangle, draw, minimum width=1cm, inner sep=0pt] at (0,0) {$1,3$\rule[-1.5mm]{0pt}{5.5mm}};
    \node (r3) [circle, draw, minimum width=6mm,inner sep=1pt] at (1.4,0.8) {$7$};
    \node (A) [rectangle, draw, minimum width=1cm, inner sep=0pt] at (1.4,0) {$5,6$\rule[-1.5mm]{0pt}{5.5mm}};
    \draw[line width=0.5pt] (r5.south)--(r5|-r1.north) (r4.south)-- (r4|-r1.north) (r1)--(A)--(r3);
    \end{tikzpicture}
$$ 
\red{also lies in (\ref{notrivial}). The proof will then be complete once we} rule out any possible solution \red{$\alpha,\beta\in H^*(M^{(3)}_d(n))$} to the equation
    \begin{equation}\label{generalindeterminacy}
    \begin{tikzpicture}[baseline=7.2pt]
    \node (r4) [circle, draw, minimum width=6mm,inner sep=1pt] at (-0.35,0.8) {$2$};
    \node (r5) [circle, draw, minimum width=6mm,inner sep=1pt] at (0.35,0.8) {$4$};
    \node (r1) [rectangle, draw, minimum width=1cm, inner sep=0pt] at (0,0) {$1,3$\rule[-1.5mm]{0pt}{5.5mm}};
    \node (r3) [circle, draw, minimum width=6mm,inner sep=1pt] at (1.4,0.8) {$7$};
    \node (A) [rectangle, draw, minimum width=1cm, inner sep=0pt] at (1.4,0) {$5,6$\rule[-1.5mm]{0pt}{5.5mm}};
    \draw[line width=0.5pt] (r5.south)--(r5|-r1.north) (r4.south)-- (r4|-r1.north) (r1)--(A)--(r3);
    \end{tikzpicture}
=\alpha\cdot\,\begin{tikzpicture}[baseline=3pt]
    \node (A) [minimum size=1.1cm, draw, minimum width=1.1cm, inner sep=0pt] at (1.4,0.23) {${1,2,3}$\rule[-1.5mm]{0pt}{5.5mm}};
    \end{tikzpicture}\,\,+\beta\,\left(\ \rule{0mm}{8mm}
    \begin{tikzpicture}[baseline=3pt]
    \node (A) [minimum size=1cm, draw, minimum width=1cm, inner sep=0pt] at (1.4,0.23) {$\red{T_4}$\rule[-1.5mm]{0pt}{5.5mm}};
    \end{tikzpicture}\hspace{.5mm}+\hspace{.5mm}\begin{tikzpicture}[baseline=3pt]
    \node (A) [minimum size=1cm, draw, minimum width=1cm, inner sep=0pt] at (1.4,0.23) {$\red{T_5}$\rule[-1.5mm]{0pt}{5.5mm}};
    \end{tikzpicture}
\hspace{.5mm}+\hspace{.5mm}\begin{tikzpicture}[baseline=3pt]
    \node (A) [minimum size=1cm, draw, minimum width=1cm, inner sep=0pt] at (1.4,0.23) {$\red{T_6}$\rule[-1.5mm]{0pt}{5.5mm}};
    \end{tikzpicture}\hspace{.5mm}+\hspace{.5mm}\begin{tikzpicture}[baseline=3pt]
    \node (A) [minimum size=1cm, draw, minimum width=1cm, inner sep=0pt] at (1.4,0.23) {$\red{T_7}$\rule[-1.5mm]{0pt}{5.5mm}};
    \end{tikzpicture}\,\ 
    \right)\red{.}
    \end{equation}
\red{Here, for $\ell \in \{4,5,6,7\}$, $T_\ell$ stands for the ordered triple of elements in the set} $\red{\{T_\ell\}}:=\{4,5,6,7\}\setminus \{\ell\}$.

\red{In what follows, the expression of a cohomology class $\gamma\in H^*(\mdkn)$ as a $\mathbb{Z}_2$-linear combination of basic elements (Defin\blue{i}tion \ref{bosquesbasicos}) will be referred to as the \emph{expansion} of $\gamma$. For instance,} by dimensional reasons, \red{the expansions of both classes $\alpha$ and $\beta$ in a potential equation (\ref{generalindeterminacy}) would have to involve exclusively elementary basis elements} of the form
\begin{equation}\label{generalterm}
\begin{tikzpicture}[baseline=3mm]
    \node (r1) [circle, draw, minimum width=6mm,inner sep=1pt] at (-0.7,0.8) {$c$};
    \node (r2) [circle, draw, minimum width=6mm,inner sep=1pt] at (0,0.8) {$d$};
    \node (r3) [circle, draw, minimum width=6mm,inner sep=1pt] at (0.7,0.8) {$e$};
    \node (A) [rectangle, draw, minimum width=1.5cm, inner sep=0pt] at (0,0) {$a,b$\rule[-1.5mm]{0pt}{5.5mm}};
    \draw[line width=0.5pt] (r1.south)--(r1|-A.north) (r2.south)--(r2|-A.north) 
    (r3.south)--(r3|-A.north);
\end{tikzpicture}
\end{equation}
\red{with} $\{a,b,c,d,e\}\subset \nn$. \red{Then, looking at the expansions of the two products on the right hand-side of any such expression (\ref{generalindeterminacy}), and using} Lemma~\ref{inprepa}, \red{we see that no basis element $\varepsilon$ in the expansion of the first product of (\ref{generalindeterminacy}) can also appear in the expansion of the second product, nor $\varepsilon$ can be the basic element on the left hand-side of (\ref{generalindeterminacy}). Consequently,} the first product on the right-hand side of \red{any potential expression (\ref{generalindeterminacy}) would have to} vanish, and we \red{are left} to rule out solutions to the simpler equation
    \begin{equation}\label{generalindeterminacysimpler}
    \begin{tikzpicture}[baseline=7.2pt]
    \node (r4) [circle, draw, minimum width=6mm,inner sep=1pt] at (-0.35,0.8) {$2$};
    \node (r5) [circle, draw, minimum width=6mm,inner sep=1pt] at (0.35,0.8) {$4$};
    \node (r1) [rectangle, draw, minimum width=1cm, inner sep=0pt] at (0,0) {$1,3$\rule[-1.5mm]{0pt}{5.5mm}};
    \node (r3) [circle, draw, minimum width=6mm,inner sep=1pt] at (1.4,0.8) {$7$};
    \node (A) [rectangle, draw, minimum width=1cm, inner sep=0pt] at (1.4,0) {$5,6$\rule[-1.5mm]{0pt}{5.5mm}};
    \draw[line width=0.5pt] (r5.south)--(r5|-r1.north) (r4.south)-- (r4|-r1.north) (r1)--(A)--(r3);
    \end{tikzpicture}=\beta\,\left(\rule{0mm}{8mm}\ 
    \begin{tikzpicture}[baseline=3pt]
    \node (A) [minimum size=1cm, draw, minimum width=1cm, inner sep=0pt] at (1.4,0.23) {$\red{T}_4$\rule[-1.5mm]{0pt}{5.5mm}};
    \end{tikzpicture}\;\;+\;\;\begin{tikzpicture}[baseline=3pt]
    \node (A) [minimum size=1cm, draw, minimum width=1cm, inner sep=0pt] at (1.4,0.23) {$\red{T}_5$\rule[-1.5mm]{0pt}{5.5mm}};
    \end{tikzpicture}\;\;
+\;\;\begin{tikzpicture}[baseline=3pt]
    \node (A) [minimum size=1cm, draw, minimum width=1cm, inner sep=0pt] at (1.4,0.23) {$\red{T}_6$\rule[-1.5mm]{0pt}{5.5mm}};
    \end{tikzpicture}\;\;+\;\;\begin{tikzpicture}[baseline=3pt]
    \node (A) [minimum size=1cm, draw, minimum width=1cm, inner sep=0pt] at (1.4,0.23) {$\red{T}_7$\rule[-1.5mm]{0pt}{5.5mm}};
    \end{tikzpicture}\,\ 
    \right)\,\raisebox{-10pt}{.}
    \end{equation}

\red{Let $\mathfrak{S}$ denote the sum inside the parenthesis of (\ref{generalindeterminacysimpler}), and suppose for a contradiction that (\ref{generalindeterminacysimpler}) holds for some $\beta\in H^*(M^{(3)}_d(n))$. We can assume without loss of generality} that \red{no basis element} (\ref{generalterm}) in \red{the expansion of} $\beta$ \red{has} zero product with \red{$\mathfrak{S}$. In particular, a new application of} Lemma~\ref{inprepa} \red{shows that} all \red{basis elements} (\ref{generalterm}) \red{in the expansion } of $\beta$ \red{must satisfy}
\begin{equation}\label{fozados}
\{1,2,3\}\subset\{a,b,c,d,e\}\subset\{1,2,3,4,5,6,7\}.
\end{equation}

\red{Let us} analyze the product with \red{$\mathfrak{S}$ of} any such basis element \red{$\tau$ in the expansion of $\beta$.} 
To begin with, there must be a (not necessarily unique) $\ell\in\{4,5,6,7\}$ with
\begin{equation}\label{tauproduct}
\raisebox{-3mm}{\raisebox{4mm}{$\tau\,\,\cdot\,\,\, $}
\begin{tikzpicture}
 \node [minimum size=1cm, draw, minimum width=1cm, inner sep=0pt] at (1.4,0.23) {$\red{T}_\ell$\rule[-1.5mm]{0pt}{5.5mm}};
\end{tikzpicture}\;\raisebox{3.5mm}{$\neq0$.}}
\end{equation}

In particular, $\{a,b\}\cap \red{\{T_\ell\}}=\varnothing$ and $|\{c,d,e\}\cap \red{\{T_\ell\}}|=1$, in view of~(\ref{fozados}). Say $e\in \red{\{T_\ell\}}$, so \red{that} $\{c,d\}\cap \red{\{T_\ell\}}=\varnothing$. Then (\ref{tauproduct}) takes the (perhaps non-basic) form
%
\begin{equation*}
    \begin{tikzpicture}[baseline=7.2pt]
    \node (r4) [circle, draw, minimum width=6mm,inner sep=1pt] at (-0.35,0.8) {$c$};
    \node (r5) [circle, draw, minimum width=6mm,inner sep=1pt] at (0.35,0.8) {$d$};
    \node (r1) [rectangle, draw, minimum width=1cm, inner sep=0pt] at (0,0) {$a,b$\rule[-1.5mm]{0pt}{5.5mm}};
    \node (A) [minimum size=1cm, draw, minimum width=1cm, inner sep=0pt] at (1.4,0.23) {$\red{T_\ell}$\rule[-1.5mm]{0pt}{5.5mm}};
    \draw[line width=0.5pt] (r5.south)--(r5|-r1.north) (r4.south)-- (r4|-r1.north) (r1)--(r1-|A.west);
    \end{tikzpicture}\ \raisebox{-4mm}{,}
\end{equation*}

where $\{1,2,3\}\subset\{a,b,c,d\}\subset\{1,2,3,4,5,6,7\}\setminus \red{\{T_\ell\}}$. \red{Altogether, we have}
\begin{equation}\label{acotados}
\{a,b,c,d\}=\{1,2,3,\ell\} \quad \text{and} \quad \red{e\in\{T_\ell\},}
\end{equation}
\red{which} allows us to evaluate in full the product $\tau\cdot\red{\mathfrak{S}}$:
\begin{itemize}
    \item If $\ell\in \{a,b\}$, say $b=\ell$, \red{then}
    \begin{align*}
    \tau\cdot \red{\mathfrak{S}} &= \tau\cdot \,\ 
    \begin{tikzpicture}[baseline=3pt]
    \node (A) [minimum size=1cm, draw, minimum width=1cm, inner sep=0pt] at (1.4,0.23) {$\red{T}_\ell$\rule[-1.5mm]{0pt}{5.5mm}};
    \end{tikzpicture}\ =\begin{tikzpicture}[baseline=3pt]
    \node (r4) [circle, draw, minimum width=6mm,inner sep=1pt] at (-0.35,0.8) {$c$};
    \node (r5) [circle, draw, minimum width=6mm,inner sep=1pt] at (0.35,0.8) {$d$};
    \node (r1) [rectangle, draw, minimum width=1cm, inner sep=0pt] at (0,0) {$a,\ell$\rule[-1.5mm]{0pt}{5.5mm}};
    \node (A) [minimum size=1cm, draw, minimum width=1cm, inner sep=0pt] at (1.4,0.23) {$\red{T}_\ell$\rule[-1.5mm]{0pt}{5.5mm}};
    \draw[line width=0.5pt] (r5.south)--(r5|-r1.north) (r4.south)-- (r4|-r1.north) (r1)--(r1-|A.west);
    \end{tikzpicture}\\&=\rule{0mm}{12mm}\begin{tikzpicture}[baseline=3pt]
    \node (r4) [circle, draw, minimum width=6mm,inner sep=1pt] at (-0.35,0.8) {$d$};
    \node (r5) [circle, draw, minimum width=6mm,inner sep=1pt] at (0.35,0.8) {$\ell$};
    \node (r1) [rectangle, draw, minimum width=1cm, inner sep=0pt] at (0,0) {$a,c$\rule[-1.5mm]{0pt}{5.5mm}};
    \node (A) [minimum size=1cm, draw, minimum width=1cm, inner sep=0pt] at (1.4,0.23) {$\red{T}_\ell$\rule[-1.5mm]{0pt}{5.5mm}};
    \draw[line width=0.5pt] (r5.south)--(r5|-r1.north) (r4.south)-- (r4|-r1.north) (r1)--(r1-|A.west);
    \end{tikzpicture}\;+\,\begin{tikzpicture}[baseline=3pt]
    \node (r4) [circle, draw, minimum width=6mm,inner sep=1pt] at (-0.35,0.8) {$c$};
    \node (r5) [circle, draw, minimum width=6mm,inner sep=1pt] at (0.35,0.8) {$\ell$};
    \node (r1) [rectangle, draw, minimum width=1cm, inner sep=0pt] at (0,0) {$a,d$\rule[-1.5mm]{0pt}{5.5mm}};
    \node (A) [minimum size=1cm, draw, minimum width=1cm, inner sep=0pt] at (1.4,0.23) {$\red{T}_\ell$\rule[-1.5mm]{0pt}{5.5mm}};
    \draw[line width=0.5pt] (r5.south)--(r5|-r1.north) (r4.south)-- (r4|-r1.north) (r1)--(r1-|A.west);
    \end{tikzpicture}\ \raisebox{-2mm}{,}
    \end{align*}
    which is a sum of two basis elements, in view of~(\ref{acotados}).
    \item If $\ell\in\{c,d\}$, say $d=\ell$, \red{then for $j\in\{4,5,6,7\}\setminus\{d,e\}$}
$$\red{    
\tau\cdot \,\ 
    \begin{tikzpicture}[baseline=3pt]
    \node (A) [minimum size=1cm, draw, minimum width=1cm, inner sep=0pt] at (1.4,0.23) {$T_j$\rule[-1.5mm]{0pt}{5.5mm}};
    \end{tikzpicture}=0,
}$$
\red{so that}
    \begin{align*}\label{basicos}\tau \cdot \red{\mathfrak{S}} &= \tau\cdot \,\ 
    \begin{tikzpicture}[baseline=3pt]
    \node (A) [minimum size=1cm, draw, minimum width=1cm, inner sep=0pt] at (1.4,0.23) {$\red{T}_\ell$\rule[-1.5mm]{0pt}{5.5mm}};
    \end{tikzpicture}\ +\tau\cdot \,\ 
    \begin{tikzpicture}[baseline=3pt]
    \node (A) [minimum size=1cm, draw, minimum width=1cm, inner sep=0pt] at (1.4,0.23) {$\red{T}_e$\rule[-1.5mm]{0pt}{5.5mm}};
    \end{tikzpicture}\\&=\rule{0mm}{12mm}
    \begin{tikzpicture}[baseline=3pt]
    \node (r4) [circle, draw, minimum width=6mm,inner sep=1pt] at (-0.35,0.8) {$c$};
    \node (r5) [circle, draw, minimum width=6mm,inner sep=1pt] at (0.35,0.8) {$\ell$};
    \node (r1) [rectangle, draw, minimum width=1cm, inner sep=0pt] at (0,0) {$a,b$\rule[-1.5mm]{0pt}{5.5mm}};
    \node (A) [minimum size=1cm, draw, minimum width=1cm, inner sep=0pt] at (1.4,0.23) {$\red{T}_\ell$\rule[-1.5mm]{0pt}{5.5mm}};
    \draw[line width=0.5pt] (r5.south)--(r5|-r1.north) (r4.south)-- (r4|-r1.north) (r1)--(r1-|A.west);
    \end{tikzpicture}\enskip 
    +\enskip 
    \begin{tikzpicture}[baseline=3pt]
    \node (r4) [circle, draw, minimum width=6mm,inner sep=1pt] at (-0.35,0.8) {$c$};
    \node (r5) [circle, draw, minimum width=6mm,inner sep=1pt] at (0.35,0.8) {$e$};
    \node (r1) [rectangle, draw, minimum width=1cm, inner sep=0pt] at (0,0) {$a,b$\rule[-1.5mm]{0pt}{5.5mm}};
    \node (A) [minimum size=1cm, draw, minimum width=1cm, inner sep=0pt] at (1.4,0.23) {$\red{T}_e$\rule[-1.5mm]{0pt}{5.5mm}};
    \draw[line width=0.5pt] (r5.south)--(r5|-r1.north) (r4.south)-- (r4|-r1.north) (r1)--(r1-|A.west);
    \end{tikzpicture}\ ,
    \end{align*}
    \red{which is again} a sum of two basis elements, in view of~(\ref{acotados}).
\end{itemize}

This shows that the product $\beta\cdot \red{\mathfrak{S}}$ is a sum of an even number of \red{basis} elements, \red{which is incompatible with} (\ref{generalindeterminacysimpler})\blue{, since the left-hand side term is its own expansion}. %
\end{proof}


\begin{corollary}\label{muygeneral} For seven pairwise distinct numbers  $a$, $b$, $c$, $d$, $e$, $f$, $g$ in $\nn$, 
the \red{triple} Massey product in $M_d^{(3)}(n)$
\[
\left\langle
\elementary{a}{b}{c},\elementary{c}{d}{e},\elementary{e}{f}{g}{+}\elementary{d}{f}{g}{+}\elementary{d}{e}{g}{+}\elementary{d}{e}{f}
\right\rangle
\]
is \red{well-defined, non-trivial, and represented by}
$$
\begin{tikzpicture}[baseline=3mm]
    \node (r4) [circle, draw, minimum width=6mm,inner sep=1pt] at (-0.35,0.8) {$b$};
    \node (r5) [circle, draw, minimum width=6mm,inner sep=1pt] at (0.35,0.8) {$d$};
    \node (r1) [rectangle, draw, minimum width=1cm, inner sep=0pt] at (0,0) {$a,c$\rule[-1.5mm]{0pt}{5.5mm}};
    \node (r3) [circle, draw, minimum width=6mm,inner sep=1pt] at (1.4,0.8) {$g$};
    \node (A) [rectangle, draw, minimum width=1cm, inner sep=0pt] at (1.4,0) {$e,f$\rule[-1.5mm]{0pt}{5.5mm}};
    \draw[line width=0.5pt] (r5.south)--(r5|-r1.north) (r4.south)-- (r4|-r1.north) (r1)--(A)--(r3);
    \end{tikzpicture}\;\,\raisebox{-5mm}{\red{.}}
    $$
\end{corollary}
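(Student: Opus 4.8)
The plan is to reduce Corollary \ref{muygeneral} to Theorem \ref{ptmnt} by a relabeling argument based on the tautological $\Sigma_n$-action on $M^{(3)}_d(n)$. First I would recall that $\Sigma_n$ acts on $(\mathbb{R}^d)^n$ by permuting the $n$ coordinate blocks, that a permutation $\sigma$ carries the diagonal subspace $A_I$ to $A_{\sigma(I)}$ and hence preserves the arrangement $A^{(3)}_d(n)$, and therefore restricts to a self-homeomorphism of $M^{(3)}_d(n)$ for every $\sigma\in\Sigma_n$. Consequently $\sigma$ induces a graded ring automorphism $\sigma^\ast$ of $H^\ast(M^{(3)}_d(n))$. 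In terms of the geometric description of Sections \ref{secciondescripciondecohomologia} and \ref{secciondescripciondeproductos}, $\sigma^\ast$ acts by merely renaming the integers housed inside the vertices of a $3$-forest: the linear cell $c_T$ is sent to the linear cell of the relabeled forest, because $\sigma$ just renames the coordinates appearing in the defining equalities and inequalities (\ref{lado}). In particular $\sigma^\ast$ permutes the elementary basis elements accordingly and is compatible with the cup-product rules of Theorem \ref{prod} and Lemma \ref{inprepa}.

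Next I would invoke the standard naturality of Massey products: a ring (indeed cochain-level) automorphism $\phi$ satisfies $\langle\phi\alpha,\phi\beta,\phi\gamma\rangle=\phi\langle\alpha,\beta,\gamma\rangle$ whenever either side is defined, with matching indeterminacy. Since $a,b,c,d,e,f,g$ are seven pairwise distinct elements of $\nn$, we must have $n\ge7>6$, so Theorem \ref{ptmnt} is in force; and there is a permutation $\sigma\in\Sigma_n$ realizing the relabeling $1\mapsto a$, $2\mapsto b$, $3\mapsto c$, $4\mapsto d$, $5\mapsto e$, $6\mapsto f$, $7\mapsto g$ (extended arbitrarily to $\{8,\dots,n\}$). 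Applying $\sigma^\ast$ to the statement of Theorem \ref{ptmnt} turns the three entries of (\ref{notrivial}) into the three entries of the Massey product of the Corollary, and turns the representative exhibited in the proof of Theorem \ref{ptmnt} ---the $3$-forest with square vertices $\{1,3\}$ and $\{5,6\}$, round vertices $2,4$ attached to the former and $7$ to the latter--- into exactly the $3$-forest displayed in the statement. Well-definedness and non-triviality are therefore inherited from Theorem \ref{ptmnt}.

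The only point deserving care is the compatibility of the $\Sigma_n$-action with the entire geometric apparatus used in Example \ref{ejemplomassey0} and the proof of Theorem \ref{ptmnt}: one should observe that the submanifolds $K,L,M,M_5,M_6,M_7,X,Y,Z$ there are carried by $\sigma$ to the analogous submanifolds of the relabeled problem, so that the relations $K\pitchfork L=\partial X$, $L\pitchfork M=\partial Y$, $X\pitchfork M$, $K\pitchfork Y$, and the bounding of $(X\cap M)\cup(K\cap Y)$ by the remainder of $\partial Z$, transport verbatim. This is immediate since $\sigma$ only renames coordinates; but if one prefers, the whole deduction can be made instantaneous by appealing solely to the algebraic naturality of Massey products under $\sigma^\ast$, in which case there is no obstacle at all ---the real content lies entirely in Theorem \ref{ptmnt}.
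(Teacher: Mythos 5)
Your proposal is correct and follows essentially the same route as the paper: the paper's proof likewise chooses a permutation $\sigma\in\Sigma_n$ sending $1,\dots,7$ to $a,\dots,g$ and uses the induced diffeomorphism of $M^{(3)}_d(n)$, together with naturality of Massey products, to transport Theorem~\ref{ptmnt} to the relabeled situation. Your additional remarks on compatibility with the geometric apparatus are fine but not needed beyond the algebraic naturality argument, exactly as you note.
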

\begin{proof}
\red{Choose a permutation $\sigma\in\Sigma_n$ with $\sigma(1)=a$, $\sigma(2)=b$, $\sigma(3)=c$, $\sigma(4)=d$, $\sigma(5)=e$, $\sigma(6)=f$ and $\sigma(7)=g$. Then, the induced diffeomorphism $\widetilde{\sigma}\colon M^{(3)}_d(n)\to  M^{(3)}_d(n)$ identifies the triple Massey product in Theorem \ref{ptmnt} with the one in Corollary \ref{muygeneral}.}
\end{proof}


\bigskip
{\small \sc Departamento de Matem\'aticas\\
Centro de Investigaci\'on y de Estudios Avanzados del I.P.N.\\
Av.~Instituto Polit\'ecnico Nacional n\'umero 2508\\
San Pedro Zacatenco, M\'exico City 07000, M\'exico \\
{\tt jesus@math.cinvestav.mx}\\\\
\blue{\small \sc Centro de Investigaci\'on en Matem\'aticas, A.C. Unidad M\'erida\\
Parque Cient\'ifico y Tecnol\'ogico de Yucat\'an\\
Carretera Sierra Papacal--Chuburn\'a Puerto Km 5.5 \\
Sierra Papacal, M\'erida, Yucat\'an, 97302, México \\
{\tt  luis.leon@cimat.mx}}}

\end{document}